\documentclass[11pt,reqno]{amsart}
\usepackage[body={7in,9in},centering]{geometry}
\usepackage{amsmath,amssymb,amsthm,mathrsfs,extarrows}
\usepackage{color,xcolor}
\definecolor{cobalt}{RGB}{61,99,181}
\usepackage[colorlinks,citecolor=cobalt,linkcolor=cobalt]{hyperref}
\usepackage{float}
\usepackage{exscale}
\usepackage{relsize}
\usepackage{graphicx}
\usepackage{tikz,tikz-cd}

\date{\today}
\newtheorem{thm}{Theorem}[section]
\newtheorem{defi}[thm]{Definition}
\newtheorem{cor}[thm]{Corollary}
\newtheorem{lem}[thm]{Lemma}

\newtheorem{ex}[thm]{Example}

\numberwithin{equation}{section}
\makeatletter

\newcommand{\Rmnum}[1]{\expandafter\@slowromancap\romannumeral #1@}
\makeatother

\begin{document}

\title{The frequent hypercyclicity of unbounded operator}
\author[Xiongxun Huang]{Xiongxun Huang}

\author[Yonglu Shu]{Yonglu Shu}

\address[Xiongxun Huang]{College of Mathematics and Statistics, Chongqing University, Chongqing, 401331, P. R. China}
\address{Email:\href{mailto:2171819@cqu.edu.cn}{20171819@cqu.edu.cn}}
\address[Yonglu Shu]{College of Mathematics and Statistics, Chongqing University, Chongqing, 401331, P. R. China}
\address{Email: \href{mailto:shuyonglu@163.com}{shuyonglu@163.com}}

\keywords{unbounded; frequently hypercyclic; linear operator}


\begin{abstract}
We establish two Frequent Hypercyclicity Criteria for unbounded operators, inspired by the frameworks of Bayart–Grivaux and deLaubenfels–Emamirad–Grosse-Erdmann. These criteria simplify the verification and construction of frequently hypercyclic operators.
\end{abstract} \maketitle

\section{Introduction}\label{Intro}
Consider a topological vector space $X $ over the scalar field $\mathbb{K}$ (where $\mathbb{K}=\mathbb{R} $ or  $\mathbb{C} $). A continuous linear operator $T: X \rightarrow X $ is termed hypercyclic if there exists a vector $x \in X $ whose orbit under $T$, defined as 
$\operatorname{orb}(x, T)=\left\{T^n x \mid n=0,1,2, \ldots \ldots\right\} $, is dense in $X $. Such a vector $x $ is referred to as a hypercyclic vector for $T $, and the set of all hypercyclic vectors for $T $ is denoted by $H C(T) $ \cite{grosse2011linear}.

The study of dynamical properties of linear operators is a pivotal branch of functional analysis, focusing on the complexity of orbital structures under operator iterations, such as hypercyclicity, mixing, and chaos. This research not only deepens the theory of operators itself but also provides novel analytical tools for disciplines like differential equations and quantum mechanics. Since Birkhoff’s \cite{birkhoff1929demonstration} discovery in 1929 of the hypercyclicity of translation operators on entire function spaces, the dynamics of linear operators has evolved significantly: Maclane (1952)\cite{maclane1952sequences} demonstrated hypercyclicity for differentiation operators, while Rolewicz (1969)\cite{rolewicz1969orbits} revealed analogous properties for shift operators on Banach spaces. A milestone in this field was Kitai’s (1984)\cite{kitai1984invariant} hypercyclicity criterion in his doctoral thesis, which laid the foundation for systematic studies of mixing, chaos, and related properties.

Frequent hypercyclicity, as a strengthened form of hypercyclicity, was first introduced by Bayart et al. in 2006\cite{1}. It requires that the orbits of an operator intersect any nonempty open set with positive lower density, as following:

\begin{defi}[\cite{1}]
    Let  $X $ be a topological vector space and  $T: X \rightarrow X $ an operator. Then a vector  $x \in X $ is called frequently hypercyclic for  $T $ if, for every non-empty open subset  $U $ of  $X $, the set  $$\left\{n \in \mathbb{N}: T^n x \in U\right\} $$ has positive lower density. The operator  $T $ is called frequently hypercyclic if it possesses a frequently hypercyclic vector. The set of all frequently hypercyclic vectors for  $T $ is denoted by  $F H C(T) $.
\end{defi}

However, existing research predominantly focuses on bounded operators, leaving the dynamical properties of unbounded operators—ubiquitous in practical applications such as differentiation operators—largely unexplored. Although Bes et al. (2001)\cite{bes2001chaotic} pioneered the study of hypercyclicity for unbounded operators and verified the chaoticity of differentiation operators, a criterion for frequent hypercyclicity in the unbounded case remains absent.

This paper aims to bridge this theoretical gap. By rigorously analyzing the proof structure of the bounded operator criterion, we identify that the properties of closed operators combined with appropriate restrictions on dense subsets provide a critical breakthrough for the unbounded setting. Building on this insight, we propose a new criterion for frequent hypercyclicity of unbounded linear operators, extending Bayart’s classical results to a broader operator class. This work not only enriches the theoretical framework of linear dynamical systems but also offers new analytical tools for investigating the complex dynamical behaviors of unbounded operators, such as differentiation operators.

We start by extending the concept of frequent hypercyclicity to unbounded situations (§2). Then we will give and prove the frequent hypercyclicity criterion for closed operators. We will also use the criterion to find several types of such operators. In §3 we will give another unbounded Frequent Hypercyclicity Criterion. Then the criterion implies the Frequent Hypercyclicity Criterion for unbounded semigroups  $\{e^{tA}\}_{t\geq0} $.

\section{Closed densely defined operator}\label{S2}
A pivotal criterion guaranteeing the frequent hypercyclicity of linear operators was established by Bayart and Grivaux \cite{1}. Then A. Bonilla and K.-G. Grosse-Erdmann derive a strengthened form\cite{bonilla2007frequently}. And we derive here a undbounded form of the criterion of A. Bonilla and K.-G. Grosse-Erdmann.

We recall a $F$-space refers to a topological vector space endowed with a complete metric that is translation-invariant. Its topology can equivalently be defined by an $F$-norm, which is a functional $\|\cdot\|: X \to \mathbb{R}_+$ satisfying the following axioms for all $x, y \in X$ and $\lambda \in \mathbb{K}$:
\begin{enumerate}
    \item[(i)] Positive definiteness: $\|x\| = 0 \iff x = 0$;
    \item[(ii)] Subhomogeneity: $\|\lambda x\| \leq \|x\|$ whenever $|\lambda| \leq 1$;
    \item[(iii)] Continuity in scalar: $\lim_{\lambda \to 0} \|\lambda x\| = 0$;
    \item[(iv)] Subadditivity: $\|x + y\| \leq \|x\| + \|y\|$.
\end{enumerate}
See \cite{kalton1984f} for equivalent formulations.

Let $X$  be an $F$ -space equipped with an $F$ -norm $\|\cdot\|$. A series $\sum_{k=1}^{\infty} x_k  $, is called unconditionally convergent if for every  $\varepsilon>0 $ there is some  $N \geq 1 $ such that for every finite subset  $F \subset \mathbb{N} $ with  $F \cap\{1,2, \ldots, N\}=\emptyset $ we have
 $$
\left\|\sum_{k \in F} x_k \right\|<\varepsilon .
 $$

Let  $T $ be an unbounded operator on a separable  $F $-space  $X. $ Since  $T $ is only densely defined on  $X $, it's possible for a vector  $f $ to be within the domain of  $T $, yet the result of  $T $ applied to  $f $, denoted as  $Tf $, might not be in the domain of  $T $. For this reason, we define  $T $ to be frequently hypercyclic as following: 
\begin{defi}
 $T $ is frequently hypercyclic if there is a vector  $f $  in the domain of  $T $ such that for every integer  $n\geq1 $ the vector  $T^nf $ is in the domain of  $T $ and every non-empty open subset  $U $ of  $X $, the set  $$\left\{n \in \mathbb{N}: T^n f \in U\right\} $$ has positive lower density. In this case, the vector  $f $ is called a frequently hypercyclic vector of  $T. $ 
\end{defi}
Frequently hypercyclic vectors can be constructed for an unbounded operator  $T $, under a sufficient condition analogous to the frequently hypercyclic criterion. The construction in the proof of the subsequent theorem incorporates a method provided by A. Bonilla and K.-G. Grosse-Erdmann\cite{bonilla2007frequently}. 
\begin{thm}[Frequent Hypercyclicity Criterion]\label{thm:result}\index{Theorem on some result}
Let  $A $ be a closed operator on a separable  $F $-space  $X $ for which  $A^r $ is a closed operator for all positive integer  $r $. Suppose that there is a dense subset  $X_0 $ of the domain of  $T $ and a mapping  $B: X_0 \rightarrow X_0 $ such that:
\begin{enumerate}
    \item  $ABx=x $ for all  $x \in X_0 $;
    \item  $\sum_{n=1}^{\infty} A^n x $ converges unconditionally, for all  $x \in X_0 $;
    \item  $\sum_{n=1}^{\infty} B^n x $ converges unconditionally, for all  $x \in X_0 $.
\end{enumerate}
Then  $A $ is frequently hypercyclic.
\end{thm}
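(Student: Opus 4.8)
The plan is to reproduce the Bonilla--Grosse-Erdmann construction of a single frequently hypercyclic vector, inserting the closedness of the powers $A^r$ at the exact point where the bounded proof uses continuity. First I would fix a sequence $(y_k)_{k\ge 1}\subset X_0$ that is dense in $X$ (possible since $X_0$ is dense in $X$). For each $k$, conditions (2) and (3) together with the definition of unconditional convergence let me choose an integer $N_k$ so large that $\|\sum_{j\in F}A^j y_k\|<\varepsilon\,2^{-k}$ and $\|\sum_{j\in F}B^j y_k\|<\varepsilon\,2^{-k}$ for every finite $F\subset\N$ with $\min F>N_k$. I would then invoke the number-theoretic lemma of \cite{bonilla2007frequently} to obtain pairwise disjoint sets $A(k)\subset\N$, each of positive lower density, whose gaps are tailored to these constants: $\min A(k)>N_k$ and $|n-m|\ge N_k+N_l$ whenever $n\in A(k)$, $m\in A(l)$, $n\ne m$. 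The candidate vector is
$$ x=\sum_{k=1}^{\infty}\sum_{n\in A(k)}B^n y_k. $$

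Before applying the operator I record the algebra of $A^mB^n$ on $X_0$. Since $ABx=x$, since $B$ maps $X_0$ into $X_0$, and since $X_0\subseteq\bigcap_{r\ge1}\mathrm{dom}(A^r)$ (as needed for (2) to make sense), iterating gives $AB^n y_k=B^{n-1}y_k$, whence $A^mB^n y_k=B^{n-m}y_k$ for $n\ge m$ and $A^mB^n y_k=A^{m-n}y_k$ for $n<m$. Grouping the defining double series by $k$, using $\min A(k)>N_k$ and the subadditivity axiom (iv), shows $\|\sum_{n\in A(k)}B^n y_k\|<\varepsilon\,2^{-k}$, so $x$ converges (indeed unconditionally). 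The same splitting shows that for each fixed $m$ the termwise image series
$$ z_m:=\sum_{k}\Big(\sum_{\substack{n\in A(k)\\ n\ge m}}B^{n-m}y_k+\sum_{\substack{n\in A(k)\\ n<m}}A^{m-n}y_k\Big) $$
converges, again by grouping by $k$ and applying the tail estimates for the $A$- and $B$-series.

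The crux --- and the only place where unboundedness intervenes --- is to show $x\in\mathrm{dom}(A^m)$ with $A^m x=z_m$, where I would use closedness in place of continuity. Enumerate the index set $\{(k,n):n\in A(k)\}$ and let $x_P$ be the partial sums over an exhausting sequence of finite subsets; each $x_P$ is a finite combination of vectors $B^n y_k\in X_0$, so $A^m x_P=\sum A^mB^n y_k$ by linearity. By the previous paragraph $x_P\to x$ and $A^m x_P\to z_m$ along this one exhaustion. Since $A^m$ is closed, this forces $x\in\mathrm{dom}(A^m)$ and $A^m x=z_m$; running the argument for every $m$ also gives $A^m x=z_m\in\mathrm{dom}(A)$, so $x$ is an admissible candidate frequently hypercyclic vector. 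This is precisely why the hypothesis that $A^r$ is closed for all $r$ is imposed.

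Finally I would estimate $\|A^m x-y_k\|$ for $m\in A(k)$. Because the sets $A(l)$ are pairwise disjoint, $m$ lies in $A(k)$ only, so in $z_m$ the unique term with $n=m$ equals $A^mB^m y_k=y_k$. For the remaining terms the gap condition $|n-m|\ge N_k+N_l>N_l$ guarantees, for each $l$, that the exponents $n-m$ (resp. $m-n$) exceed $N_l$, so the $B$-tail (resp. $A$-tail) of $y_l$ is below $\varepsilon\,2^{-l}$; summing over $l$ via subadditivity yields $\|A^m x-y_k\|=\|z_m-y_k\|\le 2\varepsilon$. As $A(k)$ has positive lower density and $(y_k)$ is dense in $X$, the set $\{m:A^m x\in U\}$ has positive lower density for every nonempty open $U$, so $x$ is frequently hypercyclic. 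I expect the main obstacle to be the closedness interchange of the third paragraph: every convergence estimate must be arranged so that $x_P\to x$ and $A^m x_P\to z_m$ hold simultaneously along a single exhaustion, which is exactly what allows closedness to substitute for the continuity used in the bounded criterion.
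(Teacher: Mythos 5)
Your construction is the same one the paper uses (the Bonilla--Grosse-Erdmann scheme, run through the number-theoretic lemma, with closedness of the powers $A^{r}$ replacing continuity at the point where $A^{m}$ is pushed through the infinite sum); your treatment of the closedness interchange via a single exhaustion of partial sums is in fact more explicit than the paper's. However, there is a genuine quantitative gap in your final step. You choose $N_{k}$ so that the tails of $\sum_{j}A^{j}y_{k}$ and $\sum_{j}B^{j}y_{k}$ beyond $N_{k}$ are below $\varepsilon 2^{-k}$, and in the estimate of $\|A^{m}x-y_{k}\|$ for $m\in A(k)$ you bound the contribution of the $y_{l}$-block by $\varepsilon 2^{-l}$ (using only that the gap exceeds $N_{l}$) and sum over $l$. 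The result is $\|A^{m}x-y_{k}\|\le 2\varepsilon$ with $\varepsilon$ fixed \emph{before} $x$ is built and the bound \emph{independent of} $k$. That is not enough to conclude: for a nonempty open set $U$ whose ``radius'' around the nearest $y_{k}$ is smaller than $2\varepsilon$, you cannot place $A^{m}x$ inside $U$, so the density of $\{m:A^{m}x\in U\}$ is not controlled. Density of $(y_{k})$ does not rescue this, because the approximation error does not tend to $0$ as $k\to\infty$.

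The repair is exactly the device in the paper's proof of its Theorem 2.6: choose $N_{l}$ so that the tail bounds hold \emph{simultaneously for all} $y_{\lambda}$ with $\lambda\le l$, with the bound $\tfrac{1}{l2^{l}}$ rather than $\varepsilon 2^{-l}$. Then, for $m\in A(k)$ and a block indexed by $l\le k$, the gap $|n-m|\ge N_{k}+N_{l}\ge N_{k}$ lets you invoke the level-$k$ bound $\tfrac{1}{k2^{k}}$ for that block (there are at most $k$ such blocks, total $\le 2^{-k}$), while blocks with $l>k$ contribute $\sum_{l>k}\tfrac{1}{l2^{l}}\le 2^{-k}$. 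This yields $\|A^{m}x-y_{k}\|\le 5\cdot 2^{-k}\to 0$, which is what the density argument actually requires. With that modification your proof goes through and coincides with the paper's.
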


We shall give the proof of Theorem \ref{thm:result} in a more general context. The concept of hypercyclicity can be extended to a notion of universality by substituting the sequence of operator iterations  $(T^n) $ with arbitrary sequence of mappings  $(T_n) $. Here, it's permissible to have different domain and range spaces.
\begin{defi}
Let $ X,Y $ be topological vector spaces and $ (T_n)_{n\in\mathbb{N}} $ a sequence of mappings from $X$ to $Y$. Then the sequence  $\left(T_n\right) $ is called frequently universal if, for every non-empty open subset  $U $ of  $Y $,there exists a vector $x \in X $,  $$\underline{\text{dens}}\left\{n \in \mathbb{N}: T_n x \in U\right\}>0. $$ Such a vector $x $ is referred to as a frequently universal element for $T $. The set of all frequently universal elements for  $\left(T_n\right) $ is denoted by  $F U\left(T_n\right) $.
\end{defi}

We also need the following concept. 
\begin{defi}[\cite{bonilla2007frequently}]
Let $X$  be an $F$ -space equipped with an $F$ -norm $\|\cdot\|$, a family of series  $\sum_{k=1}^{\infty} x_{k, j}, j \in I $, is said to be unconditionally convergent, uniformly in  $j \in I $, if for every  $\varepsilon>0 $ there is some  $N \geq 1 $ such that for every finite set  $F \subset \mathbb{N} $ with  $F \cap\{1,2, \ldots, N\}=\emptyset $ and every  $j \in I $ we have
 $$
\left\|\sum_{k \in F} x_{k, j}\right\|<\varepsilon .
 $$
\end{defi}
In the sequel we will need the following lemma.
\begin{lem}[\cite{bonilla2007frequently}]\label{lemma 2.4}
Let  $\rho ( l, \nu ) , l, \nu \geq 1 $, be non-negative numbers such that
 $$\sum_{l,v=1}^{\infty}\frac{\rho(l,v)}{2^{l+v}}<\infty. $$
Then there are pairwise disjoint sets  $A( l, \nu ) \subset \mathbb{N} , l, \nu \geq 1 $, of positive lower density such that, for all  $n\in A( l, \nu ) , m\in A( k, \mu ) $, $$\begin{aligned}n&\geq\rho(l,\nu),\\|n-m|&\geq\rho(l,\nu)+\rho(k,\mu)\quad \text { if } n\neq m.\end{aligned} $$
\end{lem}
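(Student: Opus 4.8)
The plan is to recast the separation requirement as an interval–packing problem, and then realise the packing by a staged round-robin scheduling that keeps every set at a constant service rate. First I would reduce to a convenient normalisation: since enlarging the $\rho$'s only strengthens both requirements $n\ge\rho(l,\nu)$ and $|n-m|\ge\rho(l,\nu)+\rho(k,\mu)$, I may replace $\rho(l,\nu)$ by $\lceil\rho(l,\nu)\rceil+1$, which keeps them positive integers while preserving summability (because $\lceil\rho\rceil+1\le\rho+2$ and $\sum_{l,\nu}2^{-(l+\nu)}=1$). I then enumerate the index pairs by a bijection $p\mapsto(l_p,\nu_p)$ and write $\rho_p=\rho(l_p,\nu_p)\ge 1$ and $w_p=2^{-(l_p+\nu_p)}$, so that $\sum_p w_p=1$ and $S:=\sum_p\rho_p w_p\in[1,\infty)$.

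The key observation is geometric: for $n\in A_p$ and $m\in A_q$ the inequality $|n-m|\ge\rho_p+\rho_q$ says precisely that, if to each chosen point $n\in A_p$ we attach the closed interval $[n-\rho_p,\,n+\rho_p]$ of half-width $\rho_p$, then all these intervals (over all pairs, each tagged by its own half-width) have pairwise disjoint interiors. Thus constructing the $A_p$ is exactly a problem of packing intervals of half-width $\rho_p$ into $\mathbb{N}$, using infinitely many intervals of each width, so that each width class has positive lower density. If $A_p$ has density $d_p$, its intervals occupy a proportion $2\rho_p d_p$ of $\mathbb{N}$, so a packing can exist only if $\sum_p 2\rho_p d_p\le 1$. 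This is where the hypothesis enters: I set the target densities $d_p:=c\,w_p$ with $c=1/(4S)$, so that $\sum_p 2\rho_p d_p=2cS=\tfrac12<1$, leaving half of $\mathbb{N}$ free to absorb padding.

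For the construction I would partition $\mathbb{N}$ into consecutive blocks $I_s=[a_s,a_{s+1})$ with $a_s\to\infty$ and lengths chosen large, and in block $I_s$ serve only the first $s$ pairs. I tile the interior of $I_s$ by consecutive \emph{slots}: a point assigned to pair $p$ gets a slot of length $2\rho_p$, and I place the point at distance $\rho_p$ from the slot's left endpoint. Then two adjacent points $n,m$ of widths $\rho_p,\rho_q$ satisfy $|n-m|=\rho_p+\rho_q$ exactly, and non-adjacent points are farther apart, so the separation holds automatically both within a pair and across pairs. I organise the slots into short repeating rounds serving pair $p$ in proportion $d_p$, padding the remaining ($\ge\tfrac12$) fraction of each round with empty gaps, and choose $|I_s|$ to be a large multiple of the round length, $a_p\ge\rho_p$, and a margin $\ge\max_{p\le s+1}\rho_p$ at each block boundary so that nothing straddles a boundary; this secures $n\ge\rho_p$ as well. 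Since every integer lies in at most one slot, the sets $A_p$ are pairwise disjoint.

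It remains to check positive lower density, and this is the main obstacle. For $s\ge p$ the block $I_s$ contributes about $d_p|I_s|$ points of $A_p$ spread uniformly across $I_s$ at local rate $d_p$, so for $N\in I_s$ one gets $|A_p\cap[1,N]|\approx d_p(N-a_p)$ and hence $|A_p\cap[1,N]|/N\approx d_p(1-a_p/N)\to d_p$, giving lower density $d_p>0$. The difficulty is that a one-shot packing is trivial, whereas guaranteeing that \emph{every} one of the infinitely many sets retains positive lower density forces the staged round-robin scheme: each pair is activated at a fixed stage and thereafter served at a fixed local rate, precisely so that the density dips occurring at the start of each new long block are transient and wash out in the $\liminf$. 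Maintaining a constant per-pair service rate from the activation stage onward, while still fitting all active slots into each round (which is exactly the content of $\sum_{p\le s}2\rho_p d_p\le\tfrac12$), is the delicate bookkeeping that makes the final liminf computation go through.
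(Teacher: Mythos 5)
Your outline is correct, but note first that the paper never proves Lemma~\ref{lemma 2.4} at all: it is quoted verbatim from Bonilla and Grosse-Erdmann \cite{bonilla2007frequently}, so the comparison must be with the proof given there. That proof runs on the same engine as yours but with leaner scheduling: it takes the dyadic (``ruler'') partition $I_j=\{2^{j-1}(2k-1):k\geq 1\}$ of $\mathbb{N}$, which visits index $j$ with frequency exactly $2^{-j}$, enumerates the pairs $(l,\nu)$ by a bijection with $\mathbb{N}$, and places the successive points at cumulative sums of consecutive widths (increments of the form $\rho_{j(m)}+\rho_{j(m+1)}$), so that consecutive points of widths $\rho,\rho'$ are exactly $\rho+\rho'$ apart --- precisely your abutting-slot placement. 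The summability hypothesis makes the cumulative sums grow linearly, so each $A_j$ inherits positive lower density directly from the density $2^{-j}$ of $I_j$. What the ruler sequence buys is that all infinitely many indices are served from the start at exactly the right geometric rates, so there are no activation stages, no block margins, no padding, and no transient density dips: the liminf computation reduces to a single Ces\`{a}ro estimate. What your version buys is conceptual transparency: the interval-packing reformulation, the necessity heuristic $\sum_p 2\rho_p d_p\leq 1$, and the explicit choice $d_p=cw_p$ with $2cS=\tfrac12$ explain \emph{why} geometric weights are the natural service rates, something the dyadic construction leaves implicit. Two pieces of bookkeeping you defer should be recorded in a full write-up: the per-round counts $d_pL_s$ must be rounded to integers (take $L_s\geq 2/d_s$ and serve $\lfloor d_pL_s\rfloor\geq 1$ points per round, which at worst halves each rate and leaves the packing constraint $\sum_{p\leq s}2\rho_p d_p\leq\tfrac12$ intact), and the within-pair requirement $|n-m|\geq 2\rho_p$ must be checked separately, which your adjacent-slot placement delivers with equality. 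Neither is a gap; the argument as outlined is sound and, modulo this routine bookkeeping, constitutes a valid alternative proof of the lemma.
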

Theorem \ref{thm:result} is a special case of the following theorem.And the proof of the latter will be an adaptation of that of A. Bonilla and K.-G. Grosse-Erdmann\cite{bonilla2007frequently}.
\begin{thm}[Frequent Hypercyclicity Criterion]\label{thm:result1}\index{Theorem on some result}
Let  $X $ be an  $F $-space,  $Y $ a separable  $F $-space and  $T_n: X \rightarrow Y, n \in \mathbb{N} $, closed operators. Define  $D(T_{\infty}):=\bigcap_{n=0}^{\infty}D(T_{n}) $. Suppose that there exists a topologically dense subset  $Y_0 $ of  $Y $ and a sequence of mappings  $S_n: Y_0 \rightarrow D(T_{\infty}), n \in \mathbb{N} $, such that:
\begin{enumerate}
    \item  $\sum_{n=1}^{\infty} T_{k+n} S_k y $ converges unconditionally in  $Y $, uniformly in  $k \in \mathbb{N} $, for all  $y \in Y_0 $;
    \item  $\sum_{n=1}^{\infty} T_k S_{k+n} y $ converges unconditionally in  $Y $, uniformly in  $k \in \mathbb{N} $, for all  $y \in Y_0 $;
    \item  $\sum_{n=1}^{\infty} S_n y $ converges unconditionally in  $X $, for all  $y \in Y_0 $;
    \item  $T_n S_n y \rightarrow y $ for all  $y \in Y_0 $.
\end{enumerate}
Then the sequence  $\left(T_n\right) $ is frequently universal.
\end{thm}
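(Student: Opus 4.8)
The plan is to adapt the Bonilla--Grosse-Erdmann construction, the single genuinely new ingredient being the use of closedness of the $T_n$ to move an unbounded operator inside an infinite sum. Since $Y$ is a separable $F$-space and $Y_0$ is dense, I would first fix a sequence $(y_l)_{l\ge 1}\subset Y_0$ that is dense in $Y$. It then suffices to produce a single $x\in D(T_\infty)$ such that for every $l$ and every prescribed accuracy the set of $n$ for which $T_nx$ is that close to $y_l$ has positive lower density: every nonempty open $U\subset Y$ contains some $y_l$ together with a ball about it, and the corresponding index set witnesses $\underline{\text{dens}}\{n:T_nx\in U\}>0$.

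Next I would set up the combinatorial skeleton. Using the uniform unconditional convergence in (1)--(2), the unconditional convergence in (3), and the convergence $T_nS_ny\to y$ in (4), I would select for each pair $(l,\nu)$ an integer $\rho(l,\nu)$ large enough that every tail of the three series past index $\rho(l,\nu)$ is smaller than $2^{-(l+\nu)}$ (uniformly in the base index, for (1)--(2)), that $\|T_nS_ny_l-y_l\|$ is small for $n\ge\rho(l,\nu)$, and that $\sum_{l,\nu}\rho(l,\nu)2^{-(l+\nu)}<\infty$, the hypothesis needed to invoke Lemma \ref{lemma 2.4}. That lemma then yields pairwise disjoint sets $A(l,\nu)\subset\mathbb N$ of positive lower density with $n\ge\rho(l,\nu)$ for $n\in A(l,\nu)$ and $|n-m|\ge\rho(l,\nu)+\rho(k,\mu)$ whenever $n\in A(l,\nu)$, $m\in A(k,\mu)$, $n\neq m$. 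The candidate universal element is
\[
x=\sum_{l,\nu\ge 1}\ \sum_{m\in A(l,\nu)} S_m y_l .
\]
Each inner sum ranges over indices $\ge\rho(l,\nu)$, so (3) bounds it by $2^{-(l+\nu)}$; hence the double series converges in $X$ and $x$ is well defined.

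The decisive step is to show $x\in D(T_n)$ for every $n$ and to identify $T_nx$ termwise, where closedness replaces the continuity available in the bounded theory. I would enumerate the triples $(l,\nu,m)$ with $m\in A(l,\nu)$, let $P_j$ be the $j$-th partial sum so that $P_j\to x$ in $X$, and examine $T_nP_j=\sum T_nS_my_l$ over the first $j$ triples. Splitting each $T_nS_my_l$ according to $m>n$, $m<n$, $m=n$ turns these into a subsum of the series in (2) with fixed base $n$ (case $m=n+p$), terms of the series in (1) with base $m$ (case $n=m+p$), and the diagonal term governed by (4). The separation $|n-m|\ge\rho(l,\nu)+\rho(k,\mu)$ forces every $p$ appearing to exceed the chosen thresholds, so the uniform unconditional convergence in (1)--(2) makes $(T_nP_j)_j$ Cauchy, hence convergent in $Y$ to some $t_n=\sum_{(l,\nu,m)}T_nS_my_l$. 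Because each $T_n$ is closed, $P_j\to x$ together with $T_nP_j\to t_n$ yields $x\in D(T_n)$ and $T_nx=t_n$; as this holds for all $n$, one gets $x\in D(T_\infty)$ and the full orbit is defined.

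Finally, for $n\in A(l,\nu)$ I would estimate
\[
T_nx-y_l=\bigl(T_nS_ny_l-y_l\bigr)+\sum_{\substack{(k,\mu,m)\neq(l,\nu,n)\\ m\in A(k,\mu)}}T_nS_my_k ,
\]
bounding the diagonal term by (4) and, via the same forward/backward splitting and the separation, bounding each block's contribution by its $2^{-(k+\mu)}$ budget; since these bounds are dominated by the summable family $(2^{-(k+\mu)})$ while every individual tail shrinks as $\rho(l,\nu)\to\infty$, the total error tends to $0$ as $\nu\to\infty$. Thus for any accuracy the set of good $n$ contains some $A(l,\nu)$, which has positive lower density, and frequent universality follows. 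The hardest point is exactly the interchange $T_n\sum=\sum T_n$ for the unbounded $T_n$: it is legitimate only because (1)--(3) provide convergence of the pre-images in $X$ and of the images in $Y$ simultaneously, whereupon closedness forces the two limits to agree --- this, rather than any single estimate, is where the unbounded theory departs from the bounded one. A secondary subtlety is that the forward terms ($m<n$) are single terms of series with varying base $m$, so it is precisely the uniformity in $k$ in hypothesis (1) that keeps them controllable across all blocks.
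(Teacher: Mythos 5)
Your overall architecture is the one the paper uses (and inherits from Bonilla--Grosse-Erdmann): a countable dense sequence $(y_l)$ in $Y_0$, the combinatorial Lemma~\ref{lemma 2.4}, a candidate vector of the form $x=\sum_n S_n z_n$, the three-way split of $\sum_j T_n S_j z_j$ into backward terms (controlled by hypothesis (1)), forward terms (controlled by (2)) and the diagonal term (controlled by (4)), and --- the genuinely new point in the unbounded setting --- closedness of $T_n$ applied to partial sums $P_j\to x$, $T_nP_j\to t_n$ to conclude $x\in D(T_\infty)$ and $T_nx=t_n$. That part matches the paper and is handled correctly; you also correctly identify the role of the uniformity in $k$.

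There is, however, a genuine gap in how you invoke Lemma~\ref{lemma 2.4}. You ask for integers $\rho(l,\nu)$ that are simultaneously (i) at least the thresholds past which the tails of the series in (1)--(3) for $y_l$ drop below $2^{-(l+\nu)}$ and $\|T_nS_ny_l-y_l\|$ is small, and (ii) such that $\sum_{l,\nu}\rho(l,\nu)2^{-(l+\nu)}<\infty$. These two demands are incompatible in general: the thresholds are dictated by the data, and unconditional convergence can be arbitrarily slow, so the minimal admissible $\rho(l,\nu)$ may grow like $4^{l+\nu}$ or faster, in which case no choice satisfies (ii). The paper sidesteps this by decoupling the two roles of the second index: it sets $\rho(l,\nu)=\nu$, for which summability is trivial, obtains the sets $A(l,\nu)$ for \emph{all} pairs, and only afterwards specializes the free index to the threshold, working exclusively with $A(l,N_l)$; elements of $A(l,N_l)$ are then automatically $\geq N_l$ and separated from $A(\lambda,N_\lambda)$ by at least $N_l+N_\lambda$, which is exactly what the estimates require. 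Your argument needs this re-indexing (or an equivalent device); as written, the combinatorial step can fail. A secondary consequence is that your $x$ should sum only over the selected sets $A(l,N_l)$ rather than over all pairs $(l,\nu)$, since for small $\nu$ the inner blocks are not controlled by the $2^{-(l+\nu)}$ budget.
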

\begin{proof}
Since  $Y $ is separable we may assume that  $Y_0 $ is countable,
 $$
Y_0=\left\{y_1, y_2, y_3, \ldots\right\}
 $$
The conditions (1)-(4) imply that there are  $N_l \in \mathbb{N}, l \geq 1 $, such that, for all  $\lambda \leq l $, all  $k \in \mathbb{N} $ and all finite subsets  $F $ of  $\mathbb{N} $ with  $F \cap\left\{1,2, \ldots, N_l-1\right\}=\emptyset $ we have

\begin{equation}\label{1}
\left\|\sum_{n \in F} T_{k+n} S_k y_\lambda\right\| \leq \frac{1}{l 2^l},
\end{equation}
\begin{equation}\label{2}
\left\|\sum_{n \in F} T_k S_{k+n} y_\lambda\right\| \leq \frac{1}{l 2^l}, 
\end{equation}
\begin{equation}\label{3}
\left\|\sum_{n \in F} S_n y_l\right\| \leq \frac{1}{2^l}, 
\end{equation}
\begin{equation}\label{4}
\left\|T_n S_n y_l-y_l\right\| \leq \frac{1}{2^l} \quad \text { for } n \geq N_l.
\end{equation}
We let  $\rho(l, \nu)=\nu $ for  $l, \nu \geq 1 $; then by Lemma \ref{lemma 2.4} there are pairwise disjoint sets  $A(l, \nu), l, \nu \geq 1 $, of positive lower density with the properties stated there. We define
\begin{equation}\label{5}
z_n= \begin{cases}y_l & \text { if } n \in A\left(l, N_l\right) \\ 0 & \text { otherwise }\end{cases}
\end{equation}
and set
\begin{equation}\label{6}
x=\sum_{n=1}^{\infty} S_n z_n
\end{equation}
To prove that \eqref{6} converges in $ X $, observe that for all $ l \geq 1 $,
\begin{equation}\label{7}
\sum_{n \in A\left(l, N_l\right)} S_n z_n = \sum_{n \in A\left(l, N_l\right)} S_n y_l
\end{equation}
By condition 3, \eqref{7} converges unconditionally. 
According to Lemma \ref{lemma 2.4}, we have 
$$
n \geq N_l \quad \text{for any } n \in A\left(l, N_l\right),
$$
and then from \eqref{3}, for any finite subset $ F \subset A(l, N_l) $,
$$
\left\|\sum_{n \in F} S_n y_l\right\| \leq \frac{1}{2^l}.
$$
Consequently, the same inequality holds for the entire $ A(l, N_l) $:
$$
\left\|\sum_{n \in A(l, N_l)} S_n y_l\right\| \leq \frac{1}{2^l}.
$$
For any $ \varepsilon > 0 $, there exists $ L \in \mathbb{N} $ such that 
$$
\sum_{l=L+1}^{\infty} \frac{1}{2^l} < \frac{\varepsilon}{2}.
$$
Since each series $ \sum_{n \in A\left(l, N_l\right)} S_n y_l $ converges unconditionally, for each $ l \leq L $, there exists $ N_l \in \mathbb{N} $ such that 
$$
\left\|\sum_{n \in A\left(l, N_l\right) \setminus \left\{1, \dots, N_l\right\}} S_n y_l\right\| < \frac{\varepsilon}{2^{l+1}}.
$$
Let $ N = \max \left\{N_l \mid 1 \leq l \leq L\right\} $. For $ m > N $,
$$
\begin{aligned}
\left\|\sum_{n=N+1}^m S_n z_n\right\| &\leq \left\| \sum_{l=1}^L \sum_{n \in A\left(l, N_l\right) \setminus \left\{1, \dots, N\right\}} S_n y_l \right\| + \sum_{l=L+1}^{\infty} \left\| \sum_{n \in A\left(l, N_l\right)} S_n y_l \right\| \\
&\leq \sum_{l=1}^L \frac{\varepsilon}{2^{l+1}} + \sum_{l=L+1}^{\infty} \frac{1}{2^l} \\
&< \frac{\varepsilon}{2} + \frac{\varepsilon}{2} = \varepsilon.
\end{aligned}
$$
This establishes the unconditional convergence of \eqref{6}.

Now fix  $n \in A\left(l, N_l\right) $ for some  $l \in \mathbb{N} $. Then we consider the three summands in turn.
\begin{equation}\label{8}
\sum_{j<n} T_n S_j z_j+\sum_{j>n} T_n S_j z_j+\left(T_n S_n z_n-y_l\right)
\end{equation}
We have with \eqref{5}
\begin{equation}\label{9}
\sum_{j<n} T_n S_j z_j=\sum_{\lambda=1}^l \sum_{\substack{j \in A\left(\lambda, N_\lambda\right) \\ j<n}}  T_n  S_j y_\lambda +\sum_{\lambda=l+1}^{\infty} \sum_{\substack{j \in A\left(\lambda, N_\lambda\right) \\ j<n}} T_n S_j y_\lambda .
\end{equation}
Since, by Lemma \ref{lemma 2.4},
 $$
|n-j| \geq N_l,
 $$
we have by \eqref{1} for  $\lambda \leq l $
\begin{equation}\label{10}
\left\|\sum_{\substack{j \in A\left(\lambda, N_\lambda\right) \\ j<n}} T_n S_j y_\lambda\right\|=\left\|\sum_{\substack{j \in A\left(\lambda, N_\lambda\right) \\ j<n}} T_{j+(n-j)} S_j y_\lambda\right\| \leq \frac{1}{l2^l} ;
\end{equation}
since also
 $$
|n-j| \geq N_\lambda \quad \text { for } j \in A\left(\lambda, N_\lambda\right),
 $$
we have for all  $\lambda $
\begin{equation}\label{11}
\left\|\sum_{\substack{j \in A\left(\lambda, N_\lambda\right) \\ j<n}} T_n S_j y_\lambda\right\|=\left\|\sum_{\substack{j \in A\left(\lambda, N_\lambda\right) \\ j<n}} T_{j+(n-j)} S_j y_\lambda\right\| \leq \frac{1}{\lambda 2^\lambda} .
\end{equation}
Hence, \eqref{9}-\eqref{11} imply that
\begin{equation}\label{12}
\left\|\sum_{j<n} T_n S_j z_j\right\| \leq \sum_{\lambda=1}^l \frac{1}{l 2^l}+\sum_{\lambda=l+1}^{\infty} \frac{1}{\lambda 2^\lambda} \leq \frac{2}{2^l} .
\end{equation}
In essentially the same way, using \eqref{2} instead of \eqref{1}, we see that, for all  $M>n $,
 $$
\left\|\sum_{n<j \leq M} T_n S_j z_j\right\| \leq \frac{2}{2^l},
 $$
hence
\begin{equation}\label{13}
\left\|\sum_{j>n} T_n S_j z_j\right\| \leq \frac{2}{2^l} .
\end{equation}
Finally, by \eqref{4} and \eqref{5} we have
\begin{equation}\label{14}
\left\|T_n S_n z_n-y_l\right\|=\left\|T_n S_n y_l-y_l\right\| \leq \frac{1}{2^l}
\end{equation}
since  $n \geq N_l $.
Altogether, \eqref{12}-\eqref{14} tell us that
 $$
\left\|\sum_{j<n} T_n S_j z_j+\sum_{j>n} T_n S_j z_j+\left(T_n S_n z_n-y_l\right) \right\|\leq \frac{5}{2^l} <\infty\quad \text { if } n \in A\left(l, N_l\right)
 $$
Since  $T_n $ is closed,  $$T_nx-y_l=\sum_{j<n} T_n S_j z_j+\sum_{j>n} T_n S_j z_j+\left(T_n S_n z_n-y_l\right). $$
And hence  $$\left\|T_n-y_l\right\|\leq \frac{5}{2^l}\quad \text { if } n \in A\left(l, N_l\right).  $$
Since $ Y_0 $ is dense and $ 5 / 2^l \rightarrow 0 $ as $ l \rightarrow \infty $, for any open set $ U $ in $ X $, there exists some $ l \in \mathbb{N} $ such that  
$$
A\left(l, N_l\right) \subset \left\{ n \in \mathbb{N} : T_n x \in U \right\}.
$$
Since $ A\left(l, N_l\right) $ has positive lower density, we have  
$$
\underline{\operatorname{dens}}\left(\left\{ n \in \mathbb{N} : T_n x \in U \right\}\right) > 0.
$$
This shows that  $\left(T_n\right) $ is frequently universal.
\end{proof}

\begin{cor}  
Let  $A $ be a frequently hypercyclic operator on a (real or complex) infinite-dimensional separable Banach space  $ X  $ satisfying Theorem \ref{thm:result}. If  $ A^{-1}  $ is a bounded linear operator on  $ X  $, then  $ A^{-1}  $ is also frequently hypercyclic.
\end{cor}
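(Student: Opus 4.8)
The plan is to apply the universality criterion of Theorem~\ref{thm:result1} to the iterate sequence $T_n=(A^{-1})^n$, exploiting the fact that conditions (2) and (3) of Theorem~\ref{thm:result} are symmetric under interchanging $A$ and $B$. Since $A$ satisfies Theorem~\ref{thm:result}, we are handed a dense subset $X_0\subseteq D(A)$ and a map $B:X_0\to X_0$ with $ABx=x$ and with $\sum_n A^nx$, $\sum_n B^nx$ unconditionally convergent on $X_0$. Because $A$ is injective (its inverse $A^{-1}$ exists), the identity $ABx=x$ forces $Bx=A^{-1}x$, and inductively $B^nx=(A^{-1})^nx$ for $x\in X_0$. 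Thus condition (2) reads ``$\sum_n A^n x$ converges unconditionally'' and condition (3) reads ``$\sum_n (A^{-1})^nx$ converges unconditionally''. My aim is to feed these two series back into Theorem~\ref{thm:result1} with the roles of forward and backward iteration exchanged.

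Concretely, I would set $T_n=(A^{-1})^n$, $Y_0=X_0$, and $S_n=A^n|_{X_0}$. Several bookkeeping facts are needed first: as $A^{-1}$ is bounded and everywhere defined, each $T_n$ is bounded, hence closed, and $D(T_\infty)=\bigcap_n D((A^{-1})^n)=X$, so $S_n$ need only map $Y_0$ into $X$; this is automatic since the hypotheses of Theorem~\ref{thm:result} implicitly put $X_0\subseteq\bigcap_{r\ge 1}D(A^r)$, so $A^ny$ is defined for $y\in X_0$. Moreover $Y_0=X_0$ is dense in $X$ because $X_0$ is dense in $D(A)$ and $D(A)$ is dense in $X$. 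Finally, since $A^{-1}$ is a genuine two-sided inverse, $AA^{-1}=I$ on $X$ and $A^{-1}A=I$ on $D(A)$, whence $A^{-k}A^ky=y$ for $y\in D(A^k)$.

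With these in hand the four hypotheses of Theorem~\ref{thm:result1} collapse onto the two given series. Using the cancellation identities one computes, for $y\in Y_0$,
\[
T_{k+n}S_ky=A^{-(k+n)}A^ky=(A^{-1})^ny,\qquad T_kS_{k+n}y=A^{-k}A^{k+n}y=A^ny,
\]
both independent of $k$; hence (1) follows from the unconditional convergence of $\sum_n(A^{-1})^ny$ (former condition (3)) and (2) from that of $\sum_n A^ny$ (former condition (2)), each trivially uniform in $k$. Condition (3) of Theorem~\ref{thm:result1} is $\sum_n S_ny=\sum_n A^ny$, again former condition (2), and condition (4) is $T_nS_ny=A^{-n}A^ny=y\to y$. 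Theorem~\ref{thm:result1} then yields a frequently universal element for $(T_n)=((A^{-1})^n)$, which is exactly a frequently hypercyclic vector for the bounded operator $A^{-1}$.

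The main obstacle is structural rather than computational: one is tempted to apply the operator form Theorem~\ref{thm:result} directly to $A^{-1}$, taking $A$ as its backward map, but that criterion demands a map $X_0\to X_0$, i.e.\ the invariance $A(X_0)\subseteq X_0$, which is \emph{not} among the hypotheses and need not hold. The resolution is to route through the universality version Theorem~\ref{thm:result1}, in which the maps $S_n$ are only required to land in $D(T_\infty)$; this costs nothing here precisely because $A^{-1}$ is bounded, so $D(T_\infty)=X$ and no invariance of $X_0$ under $A$ is ever needed. Beyond recognizing this, the only delicate part is the domain accounting that legitimizes the power-cancellations $A^{-(k+n)}A^k=(A^{-1})^n$ and $A^{-k}A^{k+n}=A^n$ on $X_0$, which is guaranteed by $X_0\subseteq\bigcap_r D(A^r)$.
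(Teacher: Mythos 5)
Your proof is correct, and it is in fact more careful than the paper's own argument. The paper starts from the same key observation as you: from $ABx=x$ and the invertibility of $A$ it deduces that $B$ is the restriction of $A^{-1}$ to $X_0$, notes that hypotheses (2) and (3) of Theorem \ref{thm:result} are symmetric under exchanging $A$ and $B$, and then simply invokes Theorem \ref{thm:result} for the operator $A^{-1}$ with $A|_{X_0}$ as the backward map. As you point out, that direct application requires the backward map to send $X_0$ into $X_0$, i.e.\ $A(X_0)\subseteq X_0$, which is not among the hypotheses; the paper passes over this point in silence. Your detour through the universality criterion (Theorem \ref{thm:result1}), where the maps $S_n=A^n|_{X_0}$ need only land in $D(T_\infty)=X$ because $A^{-1}$ is bounded and everywhere defined, removes the difficulty, and your domain bookkeeping for the cancellations $A^{-(k+n)}A^k=(A^{-1})^n$ and $A^{-k}A^{k+n}=A^n$ on $X_0\subseteq\bigcap_{r}D(A^r)$ is sound. (An alternative repair, closer to the paper's route, is to enlarge $X_0$ to the span of $\{A^nx : x\in X_0,\ n\ge 0\}$, which is $A$-invariant and on which the three conditions of Theorem \ref{thm:result} still hold for the pair $(A^{-1},A)$, since the relevant series differ from tails of $\sum_n A^nx$ and $\sum_n B^nx$ by finitely many terms.) In short: same key identity, but you invoke the universality form of the criterion where the paper invokes the operator form, and your version closes a gap that the paper's proof leaves open.
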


\begin{proof}
Suppose $ A  $ is a frequently hypercyclic operator on a (real or complex) infinite-dimensional separable Banach space  $ X  $ satisfying Theorem \ref{thm:result}, and  $ A^{-1}  $ is a bounded linear operator on  $ X  $. By condition 1 of Theorem \ref{thm:result}, we have
 $$
\forall x \in X_0: A^{-1}x = A^{-1}(ABx) = (A^{-1}A)Bx = Bx,
 $$
which shows that  $ B  $ coincides with the restriction of  $ A^{-1}  $ to  $ X_0  $.

According to conditions 1, 2, and 3 of Theorem \ref{thm:result}, we obtain:
\begin{enumerate}
    \item For every  $ x \in X_0  $:
     $$
    A^{-1} A x = x
     $$
    
    \item For every  $ x \in X_0  $:
     $$
    \sum_{n=1}^{\infty} (A^{-1})^n x = \sum_{n=1}^{\infty} B^n x
     $$
    is unconditionally convergent in  $ X  $.
    
    \item For every  $ x \in X_0  $:
     $$
    \sum_{n=1}^{\infty} A^n x
     $$
    is unconditionally convergent in  $ X  $.
\end{enumerate}
Therefore, by Theorem \ref{thm:result},  $ A^{-1}  $ is frequently hypercyclic.
\end{proof}

\begin{cor}
Let  $ A  $ be a frequently hypercyclic operator on a (real or complex) infinite-dimensional separable Banach space  $ X  $ satisfying Theorem \ref{thm:result}. Then all its powers  $ A^n  $ ( $ n \in \mathbb{N}  $) are frequently hypercyclic operators.
\end{cor}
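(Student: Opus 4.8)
The plan is to show that for each fixed positive integer, which I denote $p$ to avoid a clash with the summation index, the power $A^p$ again satisfies the Frequent Hypercyclicity Criterion of Theorem \ref{thm:result}, using the \emph{same} dense set $X_0$ together with the iterated right inverse $B^p$. Frequent hypercyclicity of $A^p$ then follows immediately by applying Theorem \ref{thm:result} to the pair $(A^p, B^p)$.

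First I would dispatch the structural hypotheses. By assumption $A^r$ is closed for every positive integer $r$, so in particular $(A^p)^r = A^{pr}$ is closed for every $r$, which is exactly the closedness demanded of the operator $A^p$. Since conditions (2)--(3) of Theorem \ref{thm:result} force $A^n x$ and $B^n x$ to be defined for every $n$, the set $X_0$ is contained in $\bigcap_{n\ge 1}D(A^n)\subset D(A^p)$ and remains dense in $X$; moreover $B^p$ maps $X_0$ into $X_0$ because $B$ does. Next I would verify the three numbered conditions for $(A^p, B^p)$. For condition (1), starting from $B^p x\in X_0$ and applying the identity $ABy=y$ successively to $y=B^{p-1}x,\,B^{p-2}x,\,\dots,\,x$ (each of which lies in $X_0$) collapses $A^p B^p x$ to $x$ for all $x\in X_0$. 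For conditions (2) and (3), the series $\sum_{n\ge1}(A^p)^n x=\sum_{n\ge1}A^{pn}x$ and $\sum_{n\ge1}(B^p)^n x=\sum_{n\ge1}B^{pn}x$ are precisely the subseries supported on the multiples of $p$ of the unconditionally convergent series $\sum_{n\ge1}A^n x$ and $\sum_{n\ge1}B^n x$ granted by the hypotheses on $A$.

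The only point needing explicit care is that a subseries of an unconditionally convergent series is again unconditionally convergent under the finite-tail formulation used here: given $\varepsilon>0$, choose $N$ as in the unconditional convergence of the full series; then for any finite $G\subset\mathbb{N}$ with $G\cap\{1,\dots,N\}=\emptyset$ the reindexed set $\{pn:n\in G\}$ also avoids $\{1,\dots,N\}$, since $pn\ge n>N$, whence $\bigl\|\sum_{n\in G}A^{pn}x\bigr\|<\varepsilon$ and likewise for the $B^{pn}$-series. I do not anticipate a genuine obstacle, as the argument is bookkeeping rather than analysis: closedness of all powers $A^{pr}$ is handed to us by hypothesis, the right-inverse identity is a finite iteration, and the convergence conditions reduce to the subseries stability just recorded. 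With $(A^p,B^p)$ verified to satisfy all hypotheses of Theorem \ref{thm:result}, that theorem yields that $A^p$ is frequently hypercyclic, and since $p$ was arbitrary the conclusion holds for every power.
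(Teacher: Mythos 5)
Your proposal is correct and follows essentially the same route as the paper: apply Theorem \ref{thm:result} to the pair $(A^p, B^p)$ with the same dense set $X_0$, collapsing $A^pB^p x = x$ by iterating $AB=I$ and observing that the series for $A^p$ and $B^p$ are subseries of the unconditionally convergent series for $A$ and $B$. Your explicit verification that a subseries inherits unconditional convergence in the finite-tail formulation is a detail the paper leaves implicit, but it does not change the argument.
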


\begin{proof}
Suppose  $ A  $ is a frequently hypercyclic operator on a (real or complex) infinite-dimensional separable Banach space  $ X  $ satisfying Theorem \ref{thm:result}. Then for any  $ n \in \mathbb{N}  $, its power  $ A^n  $ also satisfies these conditions.

Specifically, fix  $ n \in \mathbb{N}  $. For the dense subset  $ X_0  $, we have
 $$
X_0 \subseteq D^{\infty}(A) = D^{\infty}\left(A^n\right),
 $$
and the map  $ B^n: X_0 \rightarrow X_0  $ satisfies the required conditions. By conditions 1, 2, and 3 of Theorem \ref{thm:result}, we obtain:
\begin{enumerate}
    \item For every  $ x \in X_0  $:
     $$
    A^n B^n x = A^{n-1}(A B^n x) = A^{n-1}((A B) B^{n-1} x) = A^{n-1} B^{n-1} x = \cdots = A B x = x
     $$
    
    \item For every  $ x \in X_0  $:
     $$
    \sum_{m=1}^{\infty} (A^n)^m x = \sum_{m=1}^{\infty} A^{m n} x
     $$
    is unconditionally convergent in  $ X  $.
    
    \item For every  $ x \in X_0  $:
     $$
    \sum_{m=1}^{\infty} (B^n)^m x = \sum_{m=1}^{\infty} B^{m n} x
     $$
    is unconditionally convergent in  $ X  $.
\end{enumerate}
Therefore, by Theorem \ref{thm:result}, all power  $ A^n  $ ( $ n \in \mathbb{N}  $) is frequently hypercyclic.
\end{proof}

\begin{cor}

Let  $ A  $ be a frequently hypercyclic operator on a (real or complex) infinite-dimensional separable Banach space  $ X  $ satisfying Theorem \ref{thm:result}. Then for any (real or complex) scalar  $ \lambda  $ with  $ |\lambda|=1  $, the operator  $ \lambda A  $ is frequently hypercyclic.
\end{cor}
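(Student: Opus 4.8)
The plan is to verify that $\lambda A$ satisfies the three hypotheses of Theorem~\ref{thm:result} using the same dense subset $X_0$ but with the auxiliary map $B$ replaced by $B' := \bar\lambda B = \lambda^{-1}B$ (these coincide since $|\lambda|=1$). Because $\lambda \neq 0$, the domain of $\lambda A$ equals that of $A$, so $X_0$ remains a dense subset of $D(\lambda A)$; and since $X_0$ is a dense \emph{subspace} invariant under $B$, it is also invariant under $B' = \bar\lambda B$, so that $B' \colon X_0 \to X_0$. I would first record the structural requirement: multiplying a closed operator by a nonzero scalar again yields a closed operator, so $\lambda A$ is closed and $(\lambda A)^r = \lambda^r A^r$ is closed for every $r \geq 1$, inheriting the hypothesis on $A$.

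Condition (1) is then a one-line computation: for every $x \in X_0$,
\[
(\lambda A)B'x = \lambda A(\bar\lambda Bx) = \lambda\bar\lambda\, A(Bx) = |\lambda|^2 ABx = ABx = x,
\]
where I use $ABx = x$ from Theorem~\ref{thm:result}(1) together with $|\lambda| = 1$.

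Conditions (2) and (3) carry the real content. Here I must show that
\[
\sum_{n=1}^{\infty}(\lambda A)^n x = \sum_{n=1}^{\infty}\lambda^{n} A^n x
\qquad\text{and}\qquad
\sum_{n=1}^{\infty}(B')^n x = \sum_{n=1}^{\infty}\bar\lambda^{\,n} B^n x
\]
converge unconditionally, knowing that $\sum_n A^n x$ and $\sum_n B^n x$ do. Each new series is obtained from the corresponding old one by multiplying the $n$-th term by the unimodular scalar $\lambda^n$ (respectively $\bar\lambda^n$). Thus the heart of the matter — and the only genuine obstacle — is the stability of unconditional convergence under multiplication by a bounded scalar sequence. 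I would invoke the standard Banach-space fact that if $\sum_n u_n$ converges unconditionally and $(a_n)$ is any scalar sequence with $\sup_n |a_n| \leq 1$, then $\sum_n a_n u_n$ converges unconditionally; quantitatively, $\bigl\|\sum_{n \in F} a_n u_n\bigr\|$ is bounded by an absolute constant multiple of $\sup_{G \subseteq F}\bigl\|\sum_{n \in G} u_n\bigr\|$ (the constant being $2$ in the real and $4$ in the complex case). Applying this with $a_n = \lambda^n$ and $a_n = \bar\lambda^n$, and feeding the finite-set Cauchy characterization of unconditional convergence from the definition through this inequality, shows that the tails of the rotated series are uniformly small, which is precisely unconditional convergence. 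It is worth emphasizing that this step is exactly where the hypothesis that $X$ is a Banach space (genuine, homogeneous norm) is used, rather than a general $F$-space.

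With conditions (1)–(3) established for the pair $(\lambda A, B')$ and the closedness of all powers $(\lambda A)^r$ in hand, Theorem~\ref{thm:result} applies directly and yields that $\lambda A$ is frequently hypercyclic, completing the argument.
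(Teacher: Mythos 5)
Your proposal follows essentially the same route as the paper: replace $B$ by $\lambda^{-1}B$, verify $(\lambda A)(\lambda^{-1}B)x = ABx = x$ on $X_0$, and transfer conditions (2) and (3) to the rotated series. In fact you are more careful than the paper, which simply asserts without comment that $\sum_{n}\lambda^{n}A^{n}x$ and $\sum_{n}\lambda^{-n}B^{n}x$ converge unconditionally; your explicit appeal to the bounded-multiplier stability of unconditionally convergent series in a Banach space is precisely the justification that step requires (and, as you note, it is where the Banach-space hypothesis, as opposed to a general $F$-space, genuinely enters).
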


\begin{proof}
Suppose  $ A  $ is a frequently hypercyclic operator on a (real or complex) infinite-dimensional separable Banach space  $ X  $ satisfying Theorem \ref{thm:result}. Then all operators  $ \lambda A  $ ( $ |\lambda|=1  $) are frequently hypercyclic.

Specifically, consider the dense subset  $ X_0  $. We have
 $$
X_0 \subseteq D^{\infty}(A) = D^{\infty}(\lambda A),
 $$
and the map  $ \lambda^{-1}B: X_0 \rightarrow X_0  $ satisfies the required conditions. By conditions 1, 2, and 3 of Theorem \ref{thm:result}, we obtain:
\begin{enumerate}
    \item For every  $ x \in X_0  $:
     $$
    (\lambda A)(\lambda^{-1}B)x = (\lambda \lambda^{-1})A B x = A B x = x
     $$
    
    \item For every  $ x \in X_0  $:
     $$
    \sum_{n=1}^{\infty} (\lambda A)^n x = \sum_{n=1}^{\infty} \lambda^n A^n x
     $$
    is unconditionally convergent in  $ X  $.
    
    \item For every  $ x \in X_0  $:
     $$
    \sum_{n=1}^{\infty} (\lambda^{-1} B)^n x = \sum_{n=1}^{\infty} \lambda^{-n} B^n x
     $$
    is unconditionally convergent in  $ X  $.
\end{enumerate}
Therefore, by Theorem \ref{thm:result}, each operator  $ \lambda A  $ ( $ |\lambda|=1  $) is frequently hypercyclic.
\end{proof}

\begin{ex}[]
The operator  $D $ of differentiation defined on the dense, linear submanifold  $\mathcal{E} \equiv\left\{f \in H^2: f^{\prime} \in H^2\right\} $ of  $H^2 $ by  $D: f \mapsto f^{\prime} $ is frequently hypercyclic.
\end{ex}
\begin{proof}
Consider the set of polynomials as $X_0$ in Theorem \ref{thm:result}, which is dense in $ H^2(\mathbb{D}) $. Define the operator $ B $ on $ X_0 $ by  
$$(B f)(z) = \int_0^z f(t)  dt.$$  
With this definition, conditions 1 and 2 of Theorem \ref{thm:result} are satisfied. Since polynomials are finite sums of monomials, it suffices to verify condition 3 for all monomials $ f_k(z) = z^k $, $ k \geq 0 $. For these monomials, we have  
$$
\sum_{n=1}^{\infty} B^n f_k(z) = \sum_{n=1}^{\infty} \frac{k!}{(k+n)!} z^{k+n},
$$  
which is clearly absolutely convergent and therefore unconditionally convergent in $ H^2(\mathbb{D}) $.
\end{proof}

\begin{ex}
For any  $ k \in \mathbb{N} $, the differentiation operator  $ D: f \mapsto f' $ on  $ C^k([a,b]) $ and its iterations  $ D^r $ are unbounded closed operators and frequently hypercyclic.
\end{ex}

\begin{proof}
We prove by mathematical induction that  $ D^r $ is closed for all  $ r \geq 1 $.

Base case ( $ r = 1 $): Let  $ \{f_n\} \subset C^{1+k}([a, b]) $ satisfy:
\begin{itemize}
    \item $ f_n \to f $ in $ C^k([a, b]) $
    \item $ Df_n \to g $ in $ C^k([a, b]) $
\end{itemize}
By interchangeability of differentiation and uniform convergence, we have  $ f \in C^{1+k}([a, b]) $ and $ Df = g $. Thus  $ D $ is closed.

Inductive step: Assume $ D^m $ is closed for some $ m \geq 1 $. We prove $ D^{m+1} $ is closed. Let $ \{f_n\} \subset C^{m+k+1}([a, b]) $ satisfy:
\begin{itemize}
    \item $ f_n \to f $ in $ C^k([a, b]) $
    \item $ D^{m+1}f_n \to g $ in $ C^k([a, b]) $
\end{itemize}

Applying the inductive hypothesis to $ D^m $:
\begin{itemize}
    \item Since $ f_n \to f $ in $ C^k([a, b]) $ and $ D^m f_n \to h $ in $ C^k([a, b]) $
    \item By closure of $ D^m $, we have $ f \in C^{m+k}([a, b]) $ and $ D^m f = h $
\end{itemize}

Observe that:
\begin{itemize}
    \item $ D^{m+1}f_n = D(D^m f_n) \to g $ in $ C^k([a, b]) $
    \item $ D^m f_n \to h $ in $ C^k([a, b]) $
\end{itemize}
By closure of $ D $, we obtain $ h \in C^{1+k}([a, b]) $ and $ Dh = g $. Hence $ f \in C^{m+k+1}([a, b]) $ and $ D^{m+1}f = g $.

 Take $ X_0 $ as the polynomial space which is dense in $ C^k([a,b]) $. Define the integration operator $ B $ on $ X_0 $ by:
 $$
(Bf)(x) = \int_a^x f(t) dt
 $$
Conditions (1) and (2) of Theorem \ref{thm:result} are satisfied. For condition (3), consider monomials $ f_k(x) = x^k $:
 $$
\sum_{n=1}^\infty B^n f_k(x) = \sum_{n=1}^\infty \frac{k!}{(k+n)!} x^{k+n}
 $$
This series converges absolutely (hence unconditionally) in $ C^k([a,b]) $.
\end{proof}
We now consider weighted shift operators. 
\begin{ex}[]
Let $ X$ be the (real or complex) sequence space $ \ell_p $ ($ 1 \leq p < \infty $) or $ c_0 $, the weighted backward shift operator $ A $ is defined by  
$$
x :=(x_k)_{k \in \mathbb{N}} \mapsto Ax := \left( w^k x_{k+1} \right)_{k \in \mathbb{N}},
$$  
where $ w \in \mathbb{R} $ or $ w \in \mathbb{C} $ with $ |w| > 1 $, and its domain is given by  
$$
D(A) := \left\{ (x_k)_{k \in \mathbb{N}} \in X \, \bigg| \, \left( w^k x_{k+1} \right)_{k \in \mathbb{N}} \in X \right\}.
$$  
Then $ A$ is an unbounded frequently hypercyclic operator.

\end{ex}
\begin{proof}
Let $\mathbb{F} := \mathbb{R}$ or $\mathbb{F} := \mathbb{C}$, and let $\|\cdot\|$ denote the norm on $X$. For each $n \in \mathbb{N}$, the linear operator  
$$
A^n(x_k)_{k \in \mathbb{N}} = \left( w^{(k+n-1) + (k+n-2) + \cdots + k} x_{k+n} \right)_{k \in \mathbb{N}}, (x_k)_{k \in \mathbb{N}} \in D(A^n),
$$  
is densely defined, with its domain given by  
$$
D(A^n) := \left\{ (x_k)_{k \in \mathbb{N}} \in X \, \bigg| \, \left( w^{(k+n-1) + (k+n-2) + \cdots + k} x_{k+n} \right)_{k \in \mathbb{N}} \in X \right\}.
$$

\begin{enumerate}
    \item Unbounded:
    Construct a sequence $x^{(n)} \in D(A)$, where $x_{k+1}^{(n)} = \delta_{k,n} / w^k$ (the $(n+1)$-th term is $1 / w^n$, and the rest are 0). Then $\left\|x^{(n)}\right\|_X = |w|^{-n}$ (in $\ell^p$) or tends to 0 (in $c_0$). However, the $n$-th term of $A x^{(n)}$ is $w^n \cdot (1 / w^n) = 1$, and the rest are 0, so $\left\|A x^{(n)}\right\|_X = 1$. Normalizing $x^{(n)}$ to a unit vector (by multiplying by $|w|^n$), we get $\left\|A x^{(n)}\right\|_X = |w|^n \rightarrow \infty$ (since $|w| > 1$), proving that $A$ is unbounded.

    \item Closed:  
    Suppose $x^{(n)} \rightarrow x$ in $X$ and $A^r x^{(n)} \rightarrow y$ in $X$. For each component $k$, we have:  
    $$
    \left(A^r x^{(n)}\right)_k = w^{(k+r-1) + (k+r-2) + \cdots + k} x_{k+r}^{(n)} \rightarrow y_k  (n \rightarrow \infty).
    $$  
    Since $x^{(n)} \rightarrow x$, it follows that $x_{k+r}^{(n)} \rightarrow x_{k+r}$, and hence $y_k = w^{(k+r-1) + \cdots + k} x_{k+r}$. Since $y \in X$, we have $\left(w^{(k+r-1) + \cdots + k} x_{k+r}\right)_{k \in \mathbb{N}} \in X$, i.e., $x \in D(A^r)$ and $A^r x = y$. Thus, the graph of $A^r$ is closed.
\end{enumerate}

The domain $D(A^n)$ contains the dense subspace $c_{00}$ in $(X, \|\cdot\|)$. Defining  
$$
D^\infty(A) := \bigcap_{n=0}^\infty D(A^n) \supset c_{00},
$$  
it is clear that $D^\infty(A)$ is a dense subspace in $(X, \|\cdot\|)$.

Define $B$ as a bounded right inverse of $A$:  
$$
B(x_k)_{k \in \mathbb{N}} := \left(0, w^{-1}x_1, w^{-2}x_2, \ldots\right),  (x_k)_{k \in \mathbb{N}} \in X.
$$  
For any $n \in \mathbb{N}$,  
$$
B^n(x_k)_{k \in \mathbb{N}} = (\underbrace{0, \ldots, 0}_{n \text{ terms}}, w^{-[n + (n-1) + \cdots + 1]}x_1, w^{-[(n+1) + n + \cdots + 2]}x_2, \ldots).
$$  

By assumption, the set $X_0 = \mathrm{span}\{e_n : n \in \mathbb{N}\}$ is dense in $X$. For any $x \in X_0$, the series $\sum_{n=1}^\infty A^n x$ is finite and therefore unconditionally convergent. Clearly, $ABx = x$ for all $x \in X$.  

Furthermore, for $k \in \mathbb{N}$,  
$$
\begin{aligned}
\sum_{n=1}^\infty B^n e_k &= \sum_{n=1}^\infty \frac{1}{w^{k + (k+1) + \cdots + (k+n-1)}} e_{k+n} \\
&= w^{1 + 2 + \cdots + (k-1)} \sum_{n=1}^\infty \frac{1}{w^{1 + 2 + \cdots + (k+n-1)}} e_{k+n}.
\end{aligned}
$$  

Since $|w| > 1$, this series converges absolutely in $X$ and is thus unconditionally convergent. Similarly, $\sum_{n=1}^\infty B^n x$ is unconditionally convergent for any $x \in X_0$. Finally, applying Theorem \ref{thm:result}, we conclude that $A$ is an unbounded frequently hypercyclic operator.

\end{proof}

\section{Unbounded operator and semi-group}\label{TA}

The Frequent Hypercyclicity Criterion, which serves as a sufficient condition for frequent hypercyclicity, was discovered independently by Bayart and Grivaux \cite{1}. A broader version of this finding was later provided by A. Bonilla and K.-G. Grosse-Erdmann \cite{bonilla2007frequently}. For our subsequent discussion, we will use the following version of the criterion tailored for sequences of operators.
\begin{lem}[\cite{bonilla2007frequently}]\label{lemma 3.1}
Let  $X ,Y $ be separable Banach space and  $T_n: X \rightarrow Y, n \in \mathbb{N} $, bounded linear operators. Suppose that there are a dense subset  $Y_0 $ of  $Y $ and mappings  $S_n: Y_0 \rightarrow X, n \in \mathbb{N} $, such that:
\begin{enumerate}
    \item  $\sum_{n=1}^{\infty} T_{k+n} S_k y $ converges unconditionally in  $Y $, uniformly in  $k \in \mathbb{N} $, for all  $y \in Y_0 $;
    \item  $\sum_{n=1}^{\infty} T_k S_{k+n} y $ converges unconditionally in  $Y $, uniformly in  $k \in \mathbb{N} $, for all  $y \in Y_0 $;
    \item  $\sum_{n=1}^{\infty} S_n y $ converges unconditionally in  $X $, for all  $y \in Y_0 $;
    \item  $T_n S_n y \rightarrow y $ for all  $y \in Y_0 $.
\end{enumerate}
Then the sequence  $\left(T_n\right) $ is frequently universal.
\end{lem}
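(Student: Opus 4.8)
The plan is to obtain Lemma~\ref{lemma 3.1} as a direct specialization of the already-proved Theorem~\ref{thm:result1}. The two statements differ only in the class of spaces and operators allowed: Lemma~\ref{lemma 3.1} assumes $X,Y$ to be separable Banach spaces and the $T_n$ to be bounded linear operators, whereas Theorem~\ref{thm:result1} permits $X$ to be an $F$-space, $Y$ a separable $F$-space, and the $T_n$ merely closed operators. Since a Banach norm satisfies axioms (i)--(iv) of an $F$-norm, every Banach space is an $F$-space and the two ambient topologies---hence the two notions of unconditional convergence---coincide; and since every bounded linear operator defined on all of $X$ is closed, the hypotheses of Lemma~\ref{lemma 3.1} form a special case of those of Theorem~\ref{thm:result1}.

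Concretely, I would first record that $D(T_n)=X$ for every $n$, so that
$$
D(T_\infty)=\bigcap_{n=0}^{\infty}D(T_n)=X .
$$
Hence the mappings $S_n\colon Y_0\to X$ of Lemma~\ref{lemma 3.1} automatically take values in $D(T_\infty)$, which is exactly the codomain requirement $S_n\colon Y_0\to D(T_\infty)$ of Theorem~\ref{thm:result1}. Next I would match the four hypotheses term by term: conditions (1)--(4) of Lemma~\ref{lemma 3.1}---the unconditional convergence, uniform in $k$, of $\sum_n T_{k+n}S_k y$ and of $\sum_n T_k S_{k+n}y$, the unconditional convergence of $\sum_n S_n y$, and the convergence $T_nS_ny\to y$, all for $y\in Y_0$---are literally conditions (1)--(4) of Theorem~\ref{thm:result1}. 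As $Y$ is a separable Banach space it is a separable $F$-space, so the separability hypothesis is met as well. Invoking Theorem~\ref{thm:result1} then gives that $(T_n)$ is frequently universal, which is the desired conclusion.

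Because the argument is a pure specialization, I do not expect any substantive obstacle; the only point deserving a word of care is the verification that the Banach-norm topology is genuinely an $F$-norm topology, so that ``unconditionally convergent'' means the same thing in both statements, and this is immediate from the $F$-norm axioms. For completeness one could instead give a self-contained proof by repeating the construction in the proof of Theorem~\ref{thm:result1} verbatim: choose $N_l$ realizing the estimates coming from (1)--(4), apply Lemma~\ref{lemma 2.4} with $\rho(l,\nu)=\nu$ to produce pairwise disjoint sets $A(l,N_l)$ of positive lower density, define $x=\sum_n S_n z_n$ with $z_n=y_l$ on $A(l,N_l)$ and $z_n=0$ otherwise, and then pass the operator through the convergent series. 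The closedness argument of Theorem~\ref{thm:result1} is invoked precisely to justify applying $T_n$ to the infinite sum; for bounded $T_n$ this step is automatic by continuity, so the specialization route is the more economical one and I would present it as the main proof.
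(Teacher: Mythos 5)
Your proposal is correct, but it takes a different route from the paper: the paper offers no proof of Lemma~\ref{lemma 3.1} at all, simply citing it as a known result of Bonilla and Grosse-Erdmann \cite{bonilla2007frequently}, whereas you derive it as a corollary of Theorem~\ref{thm:result1}, which is proved in full in Section~\ref{S2}. Your specialization is sound: an everywhere-defined bounded linear operator is closed, so each $T_n$ qualifies; $D(T_n)=X$ for all $n$ gives $D(T_\infty)=X$, so the codomain requirement $S_n\colon Y_0\to D(T_\infty)$ is automatic; a separable Banach space is a separable $F$-space whose $F$-norm is the given norm, so unconditional convergence means the same thing in both statements; and hypotheses (1)--(4) are verbatim identical. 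Since Theorem~\ref{thm:result1} is proved independently in the paper (its proof adapts, rather than invokes, the Bonilla--Grosse-Erdmann argument), there is no circularity, and your route has the virtue of making the paper self-contained --- it exhibits Lemma~\ref{lemma 3.1} as the bounded special case of the unbounded criterion rather than as an external import. Your closing remark is also apt: the one place where the proof of Theorem~\ref{thm:result1} genuinely uses closedness (passing $T_n$ through the series defining $x$) is exactly the step that becomes trivial by continuity in the bounded setting.
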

\begin{thm}\label{theorem 3.2}
Let  $T $ be a (possibly unbounded) operator in a separable Banach space  $X $. Define  $D(T^{\infty}):=\bigcap_{n=0}^{\infty}D(T^{n}) $. Suppose that
\begin{enumerate}
    \item There is a set  $Y_0\subset D( T^\infty ) $ , denses in  $X $, and a mapping  $S: Y_0\to Y_0 $ such that
\begin{enumerate}
    \item  $TSy= y, $ for all  $y\in Y_0; $
    \item $\sum_{n=1}^{\infty} S^n y $ converges unconditionally, for all  $y \in Y_0 $;
    \item $\sum_{n=1}^{\infty} T^n y $ converges unconditionally, for all  $y \in Y_0 $,
\end{enumerate}
    \item there is an operator  $C\in B(X)  $ such that
\begin{enumerate}
    \item  $\operatorname{Im}(C)\subset D(T^\infty) $ and  $T^nC\in B( X) \textit{ for all }n\in \mathbb{N} ; $
    \item  $C $ commutes with  $T $ on  $D( T^{\infty }) ; $
    \item  $\operatorname{Im}(C) $ is dense in  $X. $
\end{enumerate}
\end{enumerate}
Then  $T $ is frequently hypercyclic.
\end{thm}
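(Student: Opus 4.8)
The plan is to reduce everything to a sequence of \emph{bounded} operators and then invoke Lemma \ref{lemma 3.1}. Set $T_n := T^nC$ for $n\in\mathbb{N}$; by hypothesis 2(a) each $T_n$ is a bounded operator on $X$, so $(T_n)$ is exactly the type of sequence handled by Lemma \ref{lemma 3.1}. The guiding observation is a transference: if $(T_n)$ is frequently universal with frequently universal element $x$, then $f:=Cx$ will be a frequently hypercyclic vector for $T$. Indeed, $\operatorname{Im}(C)\subset D(T^\infty)$ and $D(T^\infty)$ is $T$-invariant (if $z\in D(T^\infty)$ then $z\in D(T^{k+1})$ for all $k$, so $Tz\in D(T^\infty)$), whence $f$ and all its iterates lie in $D(T)$; moreover $T^nf=T^n(Cx)=(T^nC)x=T_nx$, so $\{n:T^nf\in U\}=\{n:T_nx\in U\}$ has positive lower density for every nonempty open $U$. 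It therefore remains only to supply, for $(T_n)$, a dense set and maps verifying the four conditions of Lemma \ref{lemma 3.1}.

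Because condition (4) of Lemma \ref{lemma 3.1} will force the targets to be images under $C$, I take as dense set $\tilde Y_0:=C(Y_0)$; since $Y_0$ is dense and $\operatorname{Im}(C)$ is dense, $C(\overline{Y_0})=\operatorname{Im}(C)\subset\overline{C(Y_0)}$ gives $\overline{\tilde Y_0}=X$, and one may assume $Y_0$ (hence $\tilde Y_0$) countable as $X$ is separable. For each $\tilde y\in\tilde Y_0$ fix a preimage $y_0\in Y_0$ with $Cy_0=\tilde y$ and define $\tilde S_n(\tilde y):=S^ny_0$. The algebraic backbone consists of the identities on $Y_0$, namely $T^kS^k=I$ (so that $T^{k+n}S^k=T^n$ and $T^kS^{k+n}=S^n$), together with the commutation $CT^m=T^mC$ on $D(T^\infty)$, which follows by induction from 2(b) and keeps all intermediate vectors inside $D(T^\infty)$. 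Granting these, the verification is short and, crucially, the $k$-dependence collapses: for condition (1), $\sum_n T_{k+n}\tilde S_k\tilde y=\sum_n T^{k+n}CS^ky_0=\sum_n CT^{k+n}S^ky_0=C\sum_n T^ny_0$, unconditionally convergent by 1(c) and boundedness of $C$, and uniformly in $k$ since the sum does not depend on $k$; for condition (2), $\sum_n T_k\tilde S_{k+n}\tilde y=\sum_n T^kCS^{k+n}y_0=C\sum_n S^ny_0$, again $k$-free and unconditionally convergent by 1(b); condition (3) is $\sum_n\tilde S_n\tilde y=\sum_n S^ny_0$, convergent by 1(b); and condition (4) holds exactly, $T_n\tilde S_n\tilde y=T^nCS^ny_0=CT^nS^ny_0=Cy_0=\tilde y$.

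Lemma \ref{lemma 3.1} then yields that $(T^nC)$ is frequently universal, and the transference of the first paragraph converts a frequently universal element $x$ into the frequently hypercyclic vector $Cx$ for $T$, completing the proof. The one genuinely delicate point is the collapse of the $k$-dependence in conditions (1)--(2): one must apply $CT^m=T^mC$ only to vectors already known to lie in $D(T^\infty)$ (here $S^jy_0$ and $CS^jy_0$) and use $T^kS^k=I$ on $Y_0$ to reduce $T^{k+n}S^k$ and $T^kS^{k+n}$ to $k$-free expressions, after which the required uniform-in-$k$ unconditional convergence is immediate. A secondary, purely bookkeeping point is the well-definedness of $\tilde S_n$ when $C$ is not injective, which is handled by the fixed choice of preimages on the countable set $\tilde Y_0$.
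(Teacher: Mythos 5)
Your proposal is correct, and it reaches the conclusion by a noticeably different implementation of the same overall strategy. Both you and the paper reduce to Lemma \ref{lemma 3.1} using the dense set $C(Y_0)$ and the commutation $T^nC=CT^n$ on $D(T^\infty)$, but the paper first builds the auxiliary Banach space $[\operatorname{Im}(C)]$ with the infimum norm $\|x\|_{[\operatorname{Im}(C)]}=\inf\{\|y\|:Cy=x\}$, proves its completeness, and applies the lemma to the restrictions $T_n:=T^n|_{[\operatorname{Im}(C)]}:[\operatorname{Im}(C)]\to X$ with $S_nz:=CS^ny$ landing in $[\operatorname{Im}(C)]$; all the series estimates are then carried out in that quotient-type norm. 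You instead keep everything in the original space by taking $T_n:=T^nC\in B(X)$ and $\tilde S_n\tilde y:=S^ny_0$, which collapses the four conditions of the lemma to the hypotheses 1(b)--(c) almost verbatim (the sums are literally $k$-independent), at the cost of an explicit transference step at the end: a frequently universal element $x$ for $(T^nC)$ yields the frequently hypercyclic vector $f=Cx$, using the $T$-invariance of $D(T^\infty)$ and $T^n(Cx)=(T^nC)x$. What your route buys is the elimination of the $[\operatorname{Im}(C)]$ machinery and its completeness proof; what the paper's route buys is that the frequently universal element is already a vector of $X$ on which $T^n$ acts directly, so no separate transference argument is needed. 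Your two flagged delicate points (applying $CT^m=T^mC$ only to vectors in $D(T^\infty)$, and fixing preimages when $C$ is not injective) are exactly the right ones, and both are handled the same way in the paper.
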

\begin{proof}
First, we verify that $[\,\text{Im}(C)\,]$ is a Banach space equipped with the norm  
$$
\|x\|_{[\,\text{Im}(C)\,]} := \inf \left\{ \|y\| \mid y \in X, \, Cy = x \right\}.
$$  
For any $x, y \in [\,\text{Im}(C)\,]$, there exist $z, w \in X$ such that $x = Cz$ and $y = Cw$. Therefore, for all $\alpha, \beta \in \mathbb{C}$,  
$$
\alpha x + \beta y = C\left( \alpha z + \beta w \right) \in [\,\text{Im}(C)\,],
$$  
proving that $[\,\text{Im}(C)\,]$ is a linear subspace.  

Positive Definiteness: $\|x\|_{[\,\text{Im}(C)\,]} \geq 0$, and $\|x\|_{[\,\text{Im}(C)\,]} = 0 \Leftrightarrow x = 0$.  

Homogeneity: For any $a \in \mathbb{C}$,  
$$
\|a x\|_{[\,\text{Im}(C)\,]} = \inf \left\{ \|a y\| \mid y \in X, \, Cy = x \right\} = |a| \|x\|_{[\,\text{Im}(C)\,]}.  
$$  

Subadditivity:
$$
\begin{aligned} 
\|x + y\|_{[\,\text{Im}(C)\,]} &\leq \inf \left\{ \|z + w\| \mid z, w \in X, \, Cz = x, \, Cw = y \right\} \\ 
&\leq \inf \left\{ \|z\| + \|w\| \mid z, w \in X, \, Cz = x, \, Cw = y \right\} \\ 
&= \|x\|_{[\,\text{Im}(C)\,]} + \|y\|_{[\,\text{Im}(C)\,]}. 
\end{aligned}
$$  

Thus, $[\,\text{Im}(C)\,]$ is indeed a normed linear space.  

Next, consider a Cauchy sequence $\{x_n\}$ in $[\,\text{Im}(C)\,]$. By definition, for any $\varepsilon > 0$, there exists $N \in \mathbb{N}$ such that for $m, n > N$,  
$$
\|x_m - x_n\|_{[\,\text{Im}(C)\,]} < \frac{\varepsilon}{2}.  
$$  
For each $x_n$, there exists $y_n \in X$ such that $Cy_n = x_n$ and  
$$
\|y_m - y_n\| \leq \|x_m - x_n\|_{[\,\text{Im}(C)\,]} + \frac{\varepsilon}{2} < \varepsilon.  
$$  
Hence, $\{y_n\}$ is a Cauchy sequence in $(X, \|\cdot\|)$. Therefore, there exists $y \in X$ such that $y_n \to y$.  

Since $\|x_n - Cy\|_{[\,\text{Im}(C)\,]} \leq \|y_n - y\|$, it follows that $x_n \to Cy$ in $[\,\text{Im}(C)\,]$, and $Cy \in [\,\text{Im}(C)\,]$. This proves that $\left([\,\text{Im}(C)\,], \|\cdot\|_{[\,\text{Im}(C)\,]}\right)$ is a Banach space.  

Define the operators $ T_n := T^n|_{[\mathrm{Im}(C)]}: [\mathrm{Im}(C)] \to X $, $ n\in \mathbb{N} $. According to (2)(a), these operators are well-defined and continuous. If one can prove that there exists a vector $ x \in \operatorname{Im}(C) $ such that for every non-empty open subset $ U $ of $ X $, the set $ \left\{ n \in \mathbb{N}: T_n x \in U \right\} $ has positive lower density, then $ T $ is frequently hypercyclic.
According to Lemma \ref{lemma 3.1}, it suffices to verify the following condition: there exists a dense subset $ Z_0 \subset X $ and mappings $ S_n: Z_0 \to [\mathrm{Im}(C)] $, $n\in\mathbb{N} $ such that
\begin{enumerate}
    \item $T_nS_nz=z $, for all  $z\in Z_0 $;
    \item $\sum_{n=1}^{\infty} S_n z $ converges unconditionally in  $X $, for all  $z \in Z_0 $; 
    \item $\sum_{n=1}^{\infty} T_{k+n} S_k z $ converges unconditionally in  $X $, uniformly in  $k \in \mathbb{N} $, for all  $z \in Z_0 $;
    \item $\sum_{n=1}^{\infty} T_k S_{k+n} z $ converges unconditionally in  $X $, uniformly in  $k \in \mathbb{N} $, for all  $z \in Z_0 $;
\end{enumerate}
Set  $Z_0:=C(Y_0) $ ; to define  $S_n:Z_0\to[\operatorname{Im}(C)] $ we choose, for each  $z\in Z_0=C(Y_0) $, a fixed vector  $y\in Y_0 $ with  $z=Cy $ and set  $S_nz:=CS^ny $ for  $n\in\mathbb{N} $\cite{martinez2002chaos}.
We show that the conditions (1)-(4) are satisfed. Since  $Y_0 $ are dense in  $X $ and  $C $ has dense range,  $Z_0 $ is dense in  $X $. For  $z\in Z_0 $ we have  $T_nS_nz=T^nCS^ny=CT^nS^ny=Cy=z $ for each  $n\in\mathbb{N} $,  where we have used that  $Y_0\subset D(T^\infty) $ and that  $C $ commutes with  $T $ on  $D(T^\infty). $ This shows (1). (1)(b) shows that  for every  $\varepsilon>0 $ there is some  $N\geq1 $ such that for every finite set  $F\subset\mathbb{N} $ with  $F\cap\{1,2,\ldots,N\}=\emptyset $ we have
 $$\left\|\sum_{n\in F}S^n y\right\|<\varepsilon. $$ And hence  $$\begin{aligned}\left\|\sum_{n\in F}S_n z\right\|_{[\mathrm{Im}(C)]}&=\left\|\sum_{n\in F}CS^ny\right\|_{[\mathrm{Im}(C)]}\\&=\inf\{\|x\|\mid x\in X,\:Cx=\sum_{n\in F}C S^n y\}\\&\leq\left\|\sum_{n\in F}S^n y\right\|<\varepsilon.\end{aligned} $$
This shows (2).And $$\begin{aligned}\left\|\sum_{n\in F}T_kS_{k+n} z\right\|_{[\mathrm{Im}(C)]}&=\left\|\sum_{n\in F}T^k C S^{k+n} y\right\|_{[\mathrm{Im}(C)]}\\&=\left\|\sum_{n\in F}CS^ny\right\|_{[\mathrm{Im}(C)]}\\&=\inf\{\|x\|\mid x\in X,\:Cx=\sum_{n\in F}C S^n y\}\\&\leq\left\|\sum_{n\in F}S^n y\right\|<\varepsilon.\end{aligned} $$
This shows (4).
(1)(c) shows that  for every  $\varepsilon>0 $ there is some  $N\geq1 $ such that for every finite set  $F\subset\mathbb{N} $ with  $F\cap\{1,2,\ldots,N\}=\emptyset $ we have
 $$\left\|\sum_{n\in F}T^n y\right\|<\varepsilon. $$ And hence  $$\begin{aligned}\left\|\sum_{n\in F}T_{k+n}S_k z\right\|_{[\mathrm{Im}(C)]}&=\left\|\sum_{n\in F}T^{k+n}CS^k y\right\|_{[\mathrm{Im}(C)]}\\&=\left\|\sum_{n\in F}CT^ny\right\|_{[\mathrm{Im}(C)]}\\&=\inf\{\|x\|\mid x\in X,\:Cx=\sum_{n\in F}C T^n y\}\\&\leq\left\|\sum_{n\in F}T^n y\right\|<\varepsilon.\end{aligned} $$
This shows (3). This completes the proof.
\end{proof}
While we could consider semigroups of unbounded operators as broadly defined by Hughes\cite{hughes1977semigroups}, it will be more practical to assume that a semigroup  $\{e^{tA}\}_{t\geq0} $ can be regularized, as following:
\begin{defi}[\cite{delaubenfels2003chaos}]
For a bounded injective operator  $C $, the strongly continuous family  $\{W(t)\}_{t\geq0} $ in  $B(X) $ is a  $C $ -regularized semigroup if
l
\begin{enumerate}
    \item $W( 0) = C; $
    \item $W( t) W( s) = CW( t+ s) $ for all  $s,t\geq0. $
\end{enumerate}
An operator  $A $ is the generator of the semigroup  $\{W(t)\}_t\geq0 $ if
 $$Ax\:=\:C^{-1}\left[\lim\limits_{t\to0}\frac{1}{t}\left(W(t)x-Cx\right)\right] $$
with
 $$D(A)\::=\:\{x\mid\text{the limit exists and is in Im}(C)\}\:. $$
\end{defi}

When $A$ generates a $C$-regularized semigroup $\{W(t)\}_{t \geq 0}$, we define the operator semigroup $\left\{e^{t A}\right\}_{t \geq 0}$ by
$$
e^{t A} := C^{-1} W(t), \quad t \geq 0
$$
with domain
$$
D\left(e^{t A}\right) := \left\{ x \in X \mid W(t)x \in \operatorname{Im}(C) \right\}.
$$

We shall utilize the relationship between semigroups and the abstract Cauchy problem:
$$
\left\{\begin{array}{ll}
\frac{\mathrm{d}}{\mathrm{d}t} u(t,x) = A(u(t,x)), & t \geq 0 \\
u(0,x) = x 
\end{array}\right.
$$

As stated in \cite{2}, we define the solution space $Z=Z(A)$ for $A$ as the collection of all $x \in X$ satisfying: the corresponding equation has a mild solution, i.e., the mapping $t \mapsto u(t,x)$ belongs to $C([0,\infty),X)$, such that for every $t \geq 0$, $v(t,x)=\int_0^t u(s,x)\,\mathrm{d}s \in D(A)$, and the following holds:
$$
\frac{\mathrm{d}}{\mathrm{d}t} v(t,x) = A(v(t,x)) + x, \quad t \geq 0
$$

This implies
$$
Z(A) = \left\{ x \in X \,\bigg|\,  \ \forall t \geq 0, W(t)x \in \operatorname{Im}(C),\ \text{and the map } t \mapsto C^{-1}W(t)x \text{ is continuous}
 \right\}
$$

For $x \in Z(A)$, its mild solution is given by:
$$
u(t,x) = e^{tA}x, \quad t \geq 0
$$

Note that $Z(A) \subset \bigcap_{t \geq 0} D(e^{tA})$. The Fr\'{e}chet space topology on $Z(A)$ can be constructed through the family of seminorms
$$
\|x\|_n := \sup_{0 \leq s \leq n} \|u(s,x)\|, \quad n \in \mathbb{N},\ x \in Z(A).
$$
Now, we can define frequent hypercyclicity for semigroups of unbounded operators.

We recall that the lower density of a measurable set  $M \subset \mathbb{R}_{+} $is defined by
 $$
\underline{\operatorname{Dens}}(M):=\liminf _{N \rightarrow \infty} \mu(M \cap[0, N]) / N
 $$
where  $\mu $ is the Lebesgue measure on  $\mathbb{R}_{+} $. 

Then the Frequent Hypercyclicity can be extended to  $C $-regularized semigroup, as following:
\begin{defi}
Let A be the generator of a C-regularized semigroup. Then the semigroup $\{e^{tA}\}_{t\geq0} $ is called frequently hypercyclic if there exists a vector  $x\in Z(A) $ for every non-empty open subset  $U $ of  $X $,  $$\underline{\operatorname{Dens}}\left\{t\geq0: e^{tA}x \in U\right\}>0.$$
\end{defi}
\begin{thm}\label{theorem 3.5}
Let A be the generator of a C-regularized semigroup. If there exists  $t>0 $ such that the operator  $e^{tA} $ is frequently hypercyclic, then  $\{e^{tA}\}_{t\geq0} $ is frequently hypercyclic.
\end{thm}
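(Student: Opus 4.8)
The plan is to transfer the frequent hypercyclicity of the single operator $T:=e^{t_0A}$ to the whole semigroup by \emph{thickening} each discrete return time into a time-interval of fixed positive length. Let $x\in Z(A)$ be a frequently hypercyclic vector for $T$, so that for every nonempty open $U\subseteq X$ the set $\{n\in\mathbb{N}: e^{nt_0A}x\in U\}$ has positive lower density; I claim the same $x$ is frequently hypercyclic for $\{e^{tA}\}_{t\ge0}$. Fix a nonempty open $U\subseteq X$ and write $M:=\{s\ge0: e^{sA}x\in U\}$, which is open in $[0,\infty)$ since $s\mapsto e^{sA}x$ is continuous. Using that the orbit of $x$ remains in $Z(A)$ and the cocycle identity $e^{(nt_0+\tau)A}x=e^{\tau A}\bigl(e^{nt_0A}x\bigr)$, the aim is to produce $\delta\in(0,t_0]$ and a neighborhood $W$ of a suitable target such that each discrete hit $e^{nt_0A}x\in W$ forces $[nt_0,nt_0+\delta]\subseteq M$.

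The first substantive step is this thickening lemma. I would pick $y\in U\cap Z(A)$ (using that $Z(A)$ is dense, which is implicit in the standing hypotheses). Since $\tau\mapsto e^{\tau A}y$ is continuous with $e^{0A}y=y\in U$, after shrinking one obtains $\delta\in(0,t_0]$ with $\{e^{\tau A}y:0\le\tau\le\delta\}$ a compact subset of the open set $U$, hence an $\varepsilon>0$ with the tube $\bigcup_{0\le\tau\le\delta}B_X(e^{\tau A}y,\varepsilon)\subseteq U$. The key quantitative input is the uniform bound $M_0:=\sup_{0\le\tau\le\delta}\|W(\tau)\|<\infty$, which holds because $\{W(\tau)\}$ is a strongly continuous family of bounded operators and $[0,\delta]$ is compact (uniform boundedness). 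Since $e^{\tau A}=C^{-1}W(\tau)$ and $C$ commutes with the semigroup, for $w,y\in\operatorname{Im}(C)$ one gets $\|e^{\tau A}(w-y)\|=\|W(\tau)C^{-1}(w-y)\|\le M_0\,\|w-y\|_{[\operatorname{Im}(C)]}$ for all $\tau\in[0,\delta]$. Taking $W$ to be the $[\operatorname{Im}(C)]$-ball of radius $\varepsilon/M_0$ about $y$ then yields $e^{\tau A}w\in U$ for all $w\in W$ and all $\tau\in[0,\delta]$, which is exactly the thickening statement.

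Granting the lemma, the density bookkeeping is routine. Set $D:=\{n\in\mathbb{N}: e^{nt_0A}x\in W\}$ and suppose $\underline{\operatorname{dens}}(D)=:c>0$. For each $n\in D$ the lemma gives $[nt_0,nt_0+\delta]\subseteq M$, and since $\delta\le t_0$ these intervals are pairwise disjoint, so $\mu\bigl(M\cap[0,Nt_0]\bigr)\ge\delta\,\bigl|D\cap[0,N)\bigr|$. Dividing by $Nt_0$ and passing to the $\liminf$ (with the standard adjustment from $N\in\mathbb{N}$ to $N\in\mathbb{R}_+$) gives $\underline{\operatorname{Dens}}(M)\ge(\delta/t_0)\,c>0$. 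As $U$ was an arbitrary nonempty open set, $x$ witnesses that $\{e^{tA}\}_{t\ge0}$ is frequently hypercyclic.

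The main obstacle is the mismatch of topologies hidden in the phrase ``suppose $\underline{\operatorname{dens}}(D)>0$.'' The hypothesis only supplies positive lower density of visits to $X$-open sets, whereas the neighborhood $W$ is open in the \emph{stronger} norm $\|\cdot\|_{[\operatorname{Im}(C)]}$ (equivalently, in the Fr\'echet topology of $Z(A)$), and it must be so, because the unbounded factor $C^{-1}$ in $e^{\tau A}=C^{-1}W(\tau)$ is precisely what prevents $X$-closeness from propagating to $X$-closeness of the thickened orbit. The heart of the proof is therefore to show that the frequent hypercyclicity of $e^{t_0A}$ persists with respect to this finer topology, i.e.\ that the orbit of $x$ visits $W$ with positive lower density; here one must exploit that, by the construction underlying Theorem~\ref{thm:result1}, the relevant dynamics take place inside $[\operatorname{Im}(C)]$, where the operators $T_n=T^n|_{[\operatorname{Im}(C)]}$ are continuous and the uniform bound $M_0=\sup_{0\le\tau\le\delta}\|W(\tau)\|$ reconciles the coarse ($X$) and fine ($[\operatorname{Im}(C)]$) notions along the orbit. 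Verifying this local equicontinuity of the regularized semigroup, and thereby legitimizing the density transfer, is the step I expect to demand the most care.
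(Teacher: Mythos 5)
Your thickening argument has a gap that you yourself flag at the end and do not close, and as it stands the argument fails at exactly that point: the neighborhood $W$ you construct is a ball in the $[\operatorname{Im}(C)]$-norm, but the only thing the hypothesis supplies is that the orbit $(e^{nt_0A}x)$ visits every nonempty \emph{$X$-open} set with positive lower density. Since $\|\cdot\|_{[\operatorname{Im}(C)]}$ is in general strictly finer than the $X$-norm (that is the whole point of the regularizing operator $C$), the set $D=\{n: e^{nt_0A}x\in W\}$ need not have positive lower density --- indeed $W$ need not be a neighborhood of $y$ in $X$ at all, so there is no reason the orbit ever enters it. Your proposed repair, to go back into the construction underlying Theorem \ref{thm:result1}, does not obviously help: there the maps $T_n=T^n|_{[\operatorname{Im}(C)]}$ are frequently universal as maps \emph{into $X$}, so the returns produced by that construction are still only returns to $X$-open sets, not to $[\operatorname{Im}(C)]$-balls.

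The paper's proof takes a different route that stays entirely in the topology of $X$. It writes, for $t\in[n,n+\delta]$,
$$
e^{tA}x-y=e^{(t-n)A}\bigl(e^{nA}x-y\bigr)+\bigl(e^{(t-n)A}y-y\bigr),
$$
controls the second term by strong continuity of the semigroup at $y$, and controls the first term by invoking \emph{local equicontinuity} of $\{e^{sA}\}_{0\le s\le\delta}$: there is an $X$-0-neighbourhood $V'$ with $e^{sA}(V')\subseteq V$ for all $s\in[0,\delta]$. Because $V'$ is open in $X$, the frequent hypercyclicity of $e^{A}$ applies directly to $y+V'$, and the density bookkeeping (which you carry out correctly) goes through. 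Be aware, though, that this local equicontinuity is itself a nontrivial assertion for a $C$-regularized semigroup of possibly unbounded operators ($e^{sA}=C^{-1}W(s)$ need not even be defined on all of $V'$), and the paper asserts it without proof; a fully rigorous version of either argument must produce a neighborhood that is open in $X$, not merely in $[\operatorname{Im}(C)]$, for the density transfer to be legitimate.
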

\begin{proof}
Assume w.l.o.g. that  $t=1 $ and let  $x $ be a frequently hypercyclic vector for  $e^A $. Let  $y \in X, U, V $ 0-neighbourhoods such that  $V+V \subseteq U $. By the strong continuity of  $\{e^{tA}\}_{t\geq0} $, there exists  $0<\delta<1 $ such that  $e^{sA} y-y \in V $ for every  $s \in[0, \delta] $.
Moreover, by the local equicontinuity of  $\{e^{tA}\}_{t\geq0} $, there exists a 0-neighbourhood  $V^{\prime} $ such that  $e^{sA}\left(V^{\prime}\right) \subseteq V $ for every  $s \in[0, \delta] $.
By assumption,
 $$
\underline{\text { dens }}\left\{n \in \mathbb{N}: e^{nA} x \in y+V^{\prime}\right\}>0
 $$
If  $e^{nA} x \in y+V^{\prime} $, then for every  $t \in[n, n+\delta] $
 $$
e^{tA} x-y=e^{(t-n)A}\left(e^{nA} x-y\right)+e^{(t-n)A} y-y \in e^{(t-n)A}\left(V^{\prime}\right)+V \subseteq V+V \subseteq U
 $$
Thus, for every  $N \in \mathbb{N} $
 $$
\frac{\mu\left\{t \leq N: e^{tA} x \in y+U\right\}}{N} \geq \delta \frac{\#\left\{n \in \mathbb{N}: n \leq N, e^{nA} x \in y+V^{\prime}\right\}}{N}
 $$
hence
 $$
\liminf _{N \rightarrow \infty} \frac{\mu\left\{t \leq N: e^{tA} x \in y+U\right\}}{N} \geq \delta \liminf _{N \rightarrow \infty} \frac{\#\left\{n \in \mathbb{N}: n \leq N, e^{nA} x \in y+V^{\prime}\right\}}{N}>0 .
 $$
\end{proof}

\begin{thm}
Let  $A $ be the generator of a C-regularized semigroup. Suppose that there is a set  $Y_0\subset
Z( A)  $, dense in  $X $, and a mapping  $S: Y_0\to Y_0 $ such that
\begin{enumerate}
    \item  $e^ASy= y, \textit{for all }y\in Y_0; $
    \item  $\sum_{n=1}^{\infty} S^ny $ converges unconditionally, for all  $y\in Y_0; $
    \item  $\sum_{n=1}^{\infty} e^{nA }y $ converges unconditionally, for all  $y\in Y_0 $,
\end{enumerate}
If, in addition, Im $(C) $ is dense in  $X $ then the operator e $^A $ and the semigroup
 $ \{e^{tA}\} _{t\geq 0} $ are frequently hypercyclic. 
\end{thm}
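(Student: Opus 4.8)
The plan is to deduce this from the two main results of this section: first I would show that the single operator $e^A$ satisfies the hypotheses of Theorem \ref{theorem 3.2} (with $T=e^A$ and the regularizing operator $C$ playing its titular role), so that $e^A$ is frequently hypercyclic; then I would invoke Theorem \ref{theorem 3.5} with $t=1$ to upgrade this to frequent hypercyclicity of the whole semigroup $\{e^{tA}\}_{t\geq0}$.

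To set up Theorem \ref{theorem 3.2} I first record the algebraic relations forced by the definition of a $C$-regularized semigroup. From $W(0)=C$ and $W(t)W(s)=CW(t+s)$ one gets $W(t)C=CW(t)$, so $W(t)$ commutes with $C$ (and with $C^{-1}$ on $\operatorname{Im}(C)$). Writing $e^{tA}=C^{-1}W(t)$, a short computation gives $e^{sA}e^{tA}x=e^{(s+t)A}x$ on $Z(A)$, and hence $(e^A)^n y=e^{nA}y$ for $y\in Z(A)$; in particular $Y_0\subset Z(A)\subset\bigcap_{n}D((e^A)^n)=D((e^A)^{\infty})$. With $T=e^A$, hypotheses (1)--(3) of the present theorem then read off exactly as conditions 1(a), 1(b), 1(c) of Theorem \ref{theorem 3.2}, since $\sum_n (e^A)^n y=\sum_n e^{nA}y$ on $Y_0$.

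It remains to produce the operator $C$ of condition 2, for which I would take the regularizing operator itself. The key identity is $e^{tA}Cx=C^{-1}W(t)Cx=C^{-1}CW(t)x=W(t)x$ for every $x\in X$, which shows both that $\operatorname{Im}(C)\subset\bigcap_t D(e^{tA})$ and that $T^nC=e^{nA}C=W(n)\in B(X)$ for all $n$---this is condition 2(a). Commutation on $D((e^A)^{\infty})$ (condition 2(b)) follows from $Ce^A y=W(1)y=e^A Cy$, and condition 2(c) is precisely the standing extra assumption that $\operatorname{Im}(C)$ is dense. Thus Theorem \ref{theorem 3.2} applies and $e^A$ is frequently hypercyclic; moreover the frequently hypercyclic vector produced there lies in $\operatorname{Im}(C)$, and the same identity $e^{tA}Cx=W(t)x$ shows $\operatorname{Im}(C)\subset Z(A)$, so that vector in fact belongs to $Z(A)$.

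Finally I would feed this into Theorem \ref{theorem 3.5} with $t=1$: since $e^A$ is frequently hypercyclic with a frequently hypercyclic vector in $Z(A)$, that theorem yields frequent hypercyclicity of $\{e^{tA}\}_{t\geq0}$. The main obstacle I anticipate is purely the bookkeeping of the regularized-semigroup calculus: one must verify that the formal manipulations $e^{sA}e^{tA}=e^{(s+t)A}$ and $e^{tA}C=W(t)$ are valid on the precise domains involved (so that the powers of $e^A$ genuinely act on $Y_0$ and on $\operatorname{Im}(C)$), and that the frequently hypercyclic vector of Theorem \ref{theorem 3.2} indeed sits inside $Z(A)$ as demanded by the semigroup definition of frequent hypercyclicity. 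Everything else is a direct transcription of the stated hypotheses into the criteria already established.
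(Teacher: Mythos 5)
Your proposal is correct and follows essentially the same route as the paper: both verify the hypotheses of Theorem \ref{theorem 3.2} with $T=e^A$ (restricted so that $D(T^\infty)\supseteq Z(A)$) and the regularizing operator $C$ itself playing the role of the intertwining operator, using $e^{nA}C=W(n)\in B(X)$, the commutation $CW(t)=W(t)C$, and $(e^A)^n=e^{nA}$ on $Z(A)$, and then both invoke Theorem \ref{theorem 3.5} with $t=1$. Your write-up is in fact somewhat more careful than the paper's, since you explicitly check that the frequently hypercyclic vector lands in $\operatorname{Im}(C)\subset Z(A)$, a point the paper leaves implicit.
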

\begin{proof}
Restricting the operator $e^{A}$ to the subspace $Z = Z(A)$, denoted by $T := e^{A}|_{Z}$, we obtain:
\begin{enumerate}
    \item Since $e^{A}$ maps $Z$ into itself, we have $D(T^\infty) = Z$
    \item For any $C$-regularized semigroup, $C$ commutes with $e^{A}$ on $Z$, and $\operatorname{Im}(C) \subset Z$
    \item For each $n \in \mathbb{N}$, the operator $e^{nA}C = W(n)$ is continuous
\end{enumerate}

From the exponential relationship $(e^{A})^{n} = e^{nA}$ valid on $Z$, Theorem \ref{theorem 3.2} implies that $e^{A}$ is frequently hypercyclic. Further applying Theorem \ref{theorem 3.5}, we obtain the frequent hypercyclicity of the semigroup $\{e^{tA}\}_{t \geq 0}$.

\end{proof}
Consider the function space $X = C_0(\mathbb{R}^+)$, consisting of all continuous functions on the positive real axis vanishing at infinity. Define the operator $C$ as:
$$
C f(x) = f(x)
$$
i.e., the identity operator on $X$, which is clearly a bounded injective operator.

Next, define the semigroup $W(t)$ by:
$$
W(t) f(x) = e^{\lambda t} f(x+t), \quad \lambda > 0
$$
We verify whether $W(t)$ satisfies the conditions of a $C$-regularized semigroup:

\begin{enumerate}
    \item $W(0) = C$:\\
    $W(0) f(x) = C f(x)$, which satisfies the condition.
    
    \item $W(t) W(s) = C W(t+s)$:\\
    $W(t) W(s) f(x) = W(t)[e^{\lambda s} f(x+s)] = e^{\lambda t}e^{\lambda s} f(x+s+t)$.\\
    On the other hand, $C W(t+s) f(x) = C[e^{\lambda (t+s)}f(x+t+s)] = e^{\lambda t}e^{\lambda s} f(x+s+t)$.\\
    Hence, $W(t) W(s) = C W(t+s)$, satisfying the condition.
\end{enumerate}

The generator $A$ is defined by:
$$
A f = C^{-1}\left[\lim_{t \rightarrow 0} \frac{W(t) f - C f}{t}\right]
$$
Calculating this limit:
$$
\lim_{t \rightarrow 0} \frac{W(t) f - C f}{t} = \lim_{t \rightarrow 0} \frac{e^{\lambda t} f(x+t) - C f(x)}{t}
$$
Since $C f(x) = f(x)$, we have:
$$
\lim_{t \rightarrow 0} \frac{e^{\lambda t} f(x+t) - f(x)}{t}
$$
Expanding and simplifying:
$$
= \lim_{t \rightarrow 0} \frac{f(x+t) - f(x) + \lambda t f(x+t)}{t} = f^{\prime}(x) + \lambda f(x)
$$
Therefore,
$$
A f(x) = C^{-1}\left(f^{\prime}(x) + \lambda f(x)\right) = f^{\prime}(x) + \lambda f(x)
$$
with domain:
$$
D(A) = \left\{f \in C_0^1(\mathbb{R}^+) \mid f^{\prime}(x) + \lambda f(x) \in \operatorname{Im}(C)\right\}
$$

The solution space $Z(A)$ is:
$$
Z(A) = \left\{ f \in X \,\bigg|\,  \ \forall t \geq 0, W(t)f \in \operatorname{Im}(C),\ \text{and the map } t \mapsto C^{-1}W(t)f \text{ is continuous} \right\}
$$

Now we verify the conditions for frequent hypercyclicity criteria:
\begin{enumerate}
    \item Dense subset $Y_0 \subset Z(A)$:\\
    Take $Y_0 = C_c^{\infty}(\mathbb{R}^+)$, the space of smooth functions with compact support, which is dense in $C_0(\mathbb{R}^+)$.
    
    \item Map $S: Y_0 \rightarrow Y_0$ satisfying $e^A S y = y$:\\
    Define $S$ by: $S y(x) = e^{-\lambda} y(x-1)$.\\
    Verify $e^A S y = y$:\\
    $e^A y(x) = e^{\lambda} y(x+1)$.\\
    $S y(x) = e^{-\lambda} y(x-1)$.\\
    Thus, $e^A S y(x) = y(x)$, satisfying the condition.
    
    \item Unconditional convergence of series $\sum_{n=1}^{\infty} S^n y$ and $\sum_{n=1}^{\infty} e^{n A} y$:\\
    For $y \in Y_0$ with compact support, $\sum_{n=1}^{\infty} e^{n A} y$ is finite and thus unconditionally convergent.\\
    $\sum_{n=1}^{\infty} S^n y = \sum_{n=1}^{\infty} e^{-n\lambda} y(x-n)$ converges unconditionally due to exponential decay $e^{-n\lambda}$ and finite support of $y(x-n)$.
    
    \item Density of $\operatorname{Im}(C)$ in $X$:\\
    $\operatorname{Im}(C) = X$ is trivially dense.
\end{enumerate}

In conclusion, this example satisfies all conditions and provides a concrete frequently hypercyclic semigroup conforming to the theorems.

\end{document}